\documentclass{elsarticle}
\usepackage{amsmath}
\usepackage{amsthm}
\usepackage{amsfonts,mathrsfs}
\usepackage{amssymb}
\usepackage{graphicx}
\usepackage{enumitem}
\usepackage{color}
\usepackage{verbatim}
\usepackage{url}
\usepackage{tikz}
\usepackage{hyperref}
\usepackage{xpatch} % Modifikation von BibLaTeX möglich
\theoremstyle{plain}
\newtheorem{theorem}{Theorem}[section]
\newtheorem{lemma}[theorem]{Lemma}
\newtheorem{example}[theorem]{Example}
\newtheorem{corollary}[theorem]{Corollary}
\newtheorem{proposition}[theorem]{Proposition}

\theoremstyle{definition}
\newtheorem{definition}[theorem]{Definition}

\newtheoremstyle{TheoremNum}
{\topsep}{\topsep}              %%% space between body and thm
{\itshape}                      %%% Thm body font
{}                              %%% Indent amount (empty = no indent)
{\bfseries}                     %%% Thm head font
{.}                             %%% Punctuation after thm head
{ }                             %%% Space after thm head
{\thmname{#1}\thmnote{ \bfseries #3}}%%% Thm head spec
\newtheorem*{remark}{Remark}
\newcommand{\abs}[1]{\lvert#1\rvert}
\newcommand{\F}{\mathbb F}

\newcommand{\Z}{\mathbb Z}
\newcommand{\PG}{\mathrm{PG}}
\newcommand{\cS}{\mathcal{S}}
\newcommand{\cP}{\mathcal{P}}
\newcommand{\cL}{\mathcal{L}}

\begin{document}	
	
	\begin{frontmatter}
		
		\title{On the construction of large local arcs}
		
		% First Author
		\author[a]{Ferdinand Ihringer}
		\ead{ihringer@sustech.edu.cn}
		
		% Second Author (Marked as corresponding author)
		\author[b]{Yue Zhou\corref{cor1}}
		\ead{yue.zhou.ovgu@gmail.com}
		
		% Corresponding author text
		\cortext[cor1]{Corresponding author}
		
		% Affiliations (Lowercase letters 'a' and 'b' map to the authors)
		\address[a]{Dept.\ of Mathematics, Southern University of Science and Technology, Shenzhen, Guangdong, China}
		\address[b]{College of Science, National University of Defense Technology, 410073 Changsha, China}
		
		\begin{abstract}
			Motivated by the construction of optimal locally repairable codes, we introduce the new finite geometric concept of a \emph{local arc} which is defined as a collection $\mathcal{S}$ of disjoint point sets $S_{i}$ in $\mathrm{PG}(2,q)$ such that $S_{i} \cup S_{j}$ is an arc for any $S_{i}, S_{j} \in \mathcal{S}$. We focus on the upper and lower bounds on the sizes of maximum $k$-uniform local arcs. For $q=p^m$ with $p$ prime, we construct $k$-uniform local arcs in $\mathrm{PG}(2,q)$ of size $\Omega(q^{d})$ where $d$ is between $1.1167$ and $1.25$ depending only on $m$. For $k=4$, this implies the existence of optimal locally repairable codes (LRCs) with minimum distance 6, locality 3, and disjoint repair groups, whose length is superlinear in $q$--a significant improvement over the previously known $O(q)$ constructions for such LRCs.
		\end{abstract}
		
		% Optional: Keywords
		\begin{keyword}
			arc \sep finite geometry \sep locally repairable code
		\end{keyword}
		
	\end{frontmatter}
	
	\section{Introduction}\label{sec:intro}
	Over the past few decades, the notion of locality in coding theory has attracted considerable attention. Traditionally, error-correcting codes rely on examining the entire message to detect corruptions or correct a single error. Locality fundamentally alters this paradigm: it enables the detection of corruptions (as in Locally Testable Codes) or the recovery of data (as in Locally Repairable Codes) by accessing only a small fraction of the codeword.
	
	The study of locality in coding theory has given rise to numerous intriguing combinatorial problems, including, most prominently, the construction of high-dimensional expanders for locally testable codes by Dinur, Evra, Livne, Lubotzky and Mozes in \cite{DELLM}. In this paper, we introduce a new finite geometric concept, termed \emph{local arc}, that is closely related to locally recoverable codes.
	
	Let $\cS$ be a collection of subsets of points in $\PG(2,q)$, the classical projective plane of order $q$.
	We call $\cS$ a \emph{local arc} if
	\begin{itemize}
		\item $S_i\cap S_j=\emptyset$, and
		\item $S_i \cup S_j$ is an arc, i.e.\  no three points of $S_i\cup S_j$ is on a common line,
	\end{itemize}
	for any distinct $S_i,S_j\in \cS$.	We say that $\cS$ is a \emph{$k$-uniform local arc} if $|S_i| = k$ for all $S_i \in \cS$.
	We say that $|\cS|$ is the \emph{size of the local arc $\cS$}.
	
	A trivial construction of $k$-uniform local arcs is a partition of an arc consisting of $k\ell$ points. Thus, there always exist $k$-uniform local arcs of size $\lfloor \frac{q+1}{k} \rfloor$ for $q$ odd and $\lfloor \frac{q+2}{k} \rfloor$ for $q$ even due to the existence of ovals and hyperovals, respectively.
	
	In this note, we are interested in constructions of large $k$-uniform local arcs and the upper bounds of their sizes.
	One of our motivations is from locally repairable codes (LRC). 
	
	An error-correcting code $C$ is said to have \emph{locality} $r$ at $i$-th symbol if $c_i$ of every codeword $c = (c_1, \dots, c_n)\in C$ can be reconstructed by a functional dependence on at most $r$ other symbols.
	If each symbol	of $C$ has locality $r$, $C$ is called a \emph{locally repairable code} (LRC) with locality $r$. A $q$-ary $(n,K,d)$-code $C$ with locality $r$ is called \emph{optimal} or \emph{Singleton-optimal}, if the upper bound in $d\leq n-\log_{q}K-\lceil \log_{q}K/r\rceil +2$ is achieved; see \cite{TB2014} for a proof of the inequality using graph theory. In particular, the length of them can be much bigger than $q+1$ which is quite different from the classical $q$-ary MDS codes where it is conjectured that the code length is upper bounded by $q+1$ (or $q+2$ for some special cases). The maximal length problem of Singleton-optimal linear LRCs and their constructions have become the most important research problems on LRCs in the past 10 years; see \cite{FTFC2024,J2019,GHSY2012,TB2014,XY2022} and the references therein for details.
	
	An $[n,k,d]$-linear code with locality $r$ is often denoted as an $(n,k,d; r)$-LRC. In \cite{FTFC2024}, it is proved that a Singleton-optimal $(n, k, 6; 3)$-LRC with disjoint repair groups is equivalent to a $4$-uniform local arc. More explicitly, if a $4$-uniform local arc $\cS$ has $b=|\cS|$ members, then the corresponding code has parameters
	\[
		[4b,3b-3,6]_q,
	\]
	locality $3$, and $b$ pairwise disjoint repair groups. Each member $S\in\cS$ supplies one repair group of four coordinates, so each coordinate in that group is recoverable from the other three. In the correspondence of \cite{FTFC2024}, the requirement that the union of any two members of $\cS$ is an arc is precisely the geometric compatibility condition used to obtain minimum distance $6$. Thus increasing $|\cS|$ directly increases the code length $n=4|\cS|$.

	Different families of $(q+1, k, 6; 3)$-LRCs and sporadic examples for small $q$ and small length $n$ are obtained in \cite{FTFC2024,XL2026}. 

	Our main result is as follows:

	\begin{theorem}\label{thm:main}
	Fix an integer $k\geq 2$.
	\begin{enumerate}[label=(\roman*)]
		\item For every $k$-uniform local arc $\cS$ in $\PG(2, q)$, 
		$$|\cS|\leq \left( \frac{\sqrt{2}}{k\sqrt{k-1}} + o(1)\right) q^{1.5}$$
		as $q\rightarrow \infty$ with fixed $k$. 
		\item For fixed $m\geq 1$, there exist $k$-uniform local arcs $\cS$ in $\PG(2,q)$ with
		$$|\cS|=\Omega(q^{d_m}),$$
		where $q = p^m$, $p$ prime, and
		\begin{align}
			d_m= 
			\begin{cases}
%				1.2334, & m=1;\\
				1.1167+0.1167/m, & 2\nmid m;\\
				1.1167, & m=4;\\
				1.25-0.2666/m, & 4\nmid m, 2\mid m;\\
				1.25-0.7666/m, & 4\mid m \text{ and } m>4,
			\end{cases}	 \label{eq:localarc}
		\end{align}
		as $p\rightarrow \infty$ through the odd primes.
	\end{enumerate}
% 	 Furthermore, if $q$ is prime then there exists a $k$-uniform local
% 	 arc of size at least $(1+o_k(1)) p^{1.233}$.
	\end{theorem}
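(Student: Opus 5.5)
For part (i) I would double count collinearity-free configurations built from two members of $\cS$. First I record the structural facts forced by the definition. Let $U=\bigcup_{S\in\cS}S$, so $N=|U|=k|\cS|$. A line $\ell$ meets each $S_i$ in at most two points. If $\ell$ is a secant of some $S_i$, then $\ell\cap U=\ell\cap S_i$, because $S_i\cup S_j$ is an arc for every $j$. In particular the $\binom{k}{2}|\cS|$ secant lines of the various members are pairwise distinct, and every point lying on secants of two different members is outside $U$.

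The target constant $\sqrt2/(k\sqrt{k-1})$ is equivalent to
\[
|\cS|^2\,\tfrac{k^2(k-1)}{2}\le (1+o(1))q^3 .
\]
This matches the number of configurations $(P,\{Q,Q'\})$ with $P\in S_i$, $\{Q,Q'\}\subset S_j$ and $i\neq j$, where ordered pairs $(i,j)$ are counted. So I would aim for a map from these configurations into a set of size $(1+o(1))q^3$ that is injective, or at most $(1+o(1))$-to-one. The natural candidate sends the configuration to the pair $(X,m)$, where $m$ is the secant $QQ'$ and $X$ is a suitably chosen point on $m$ related to $P$, for instance an intersection point determined by the lines $PQ$ and $PQ'$ with a fixed reference line. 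Failing such a map, I would fall back on a second-moment argument over the points $X\notin U$. Let $s_X$ be the number of secants through $X$ and let $r_X$ be the number of points of $U$ on non-secant lines through $X$. Since each line through $X$ is a secant of at most one member, one has $s_X+r_X/2\lesssim q+1$, and I would combine this with Cauchy--Schwarz on $\sum_X s_X r_X$; this sum counts exactly the configurations above. I expect the bookkeeping to produce $q^3$ times the constant, and this step, finding the right injective count, is where I expect the main difficulty of (i).

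For part (ii) the plan is a probabilistic or greedy construction whose error terms are controlled by point--line incidence bounds over finite fields. I would take a structured ground set, such as a grid $A\times A$ with $A$ a subset of $\F_p$, or of a subfield of $\F_q$ when $m$ is even or divisible by $4$. Then I would choose $k$-subsets at random with the right density and delete every member involved in a \emph{bad} configuration. A bad configuration is either a collinear triple inside one member, or a collinear triple spread over two members. Its expected number is governed by the number $T(A)$ of collinear triples in the ground set.

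I would bound $T(A)$ via the Stevens--de Zeeuw incidence bound, together with its subfield-sensitive variants. These give nontrivial bounds only when $|A|$ is small compared with the characteristic, or compared with the relevant subfield. This restriction is why the exponent $d_m$ depends on $m$ and on the divisibility of $m$ by $2$ and $4$: one uses the subfield $\F_{p^{m/2}}$ or $\F_{p^{m/4}}$ when it exists and gives a better trade-off. After the alteration step I would optimise the density and $|A|$ in each case, which should yield the four exponents in \eqref{eq:localarc}. The remaining routine part is checking that the surviving members still number $\Omega(q^{d_m})$ as $p\to\infty$.
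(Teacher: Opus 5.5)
\textbf{Part (i).} Your reformulation of (i) as $k|\cS|\cdot\binom{k}{2}|\cS|\le(1+o(1))q^3$ is correct, but neither route you give proves it.

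Your main route is a map $(P,\{Q,Q'\})\mapsto(X,m)$ with $m=QQ'$ and $X\in m$. Such a map cannot be injective or $(1+o(1))$-to-one. A fixed secant $m$ has $k(|\cS|-1)$ admissible points $P$ but only $q+1$ points $X$. Injectivity would therefore force $|\cS|\le (q+1)/k+1$, which contradicts (ii).

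In your fallback, the inequality $s_X+r_X/2\lesssim q+1$ is false. For $X\notin U$, every point of $U$ lies on exactly one line through $X$, and each secant through $X$ carries exactly two of them. Hence $2s_X+r_X=|U|$, so $s_X+r_X/2=|U|/2$. Also, $\sum_X s_Xr_X$ counts each configuration about $q$ times, not once.

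What is missing is the second eigenvalue $\sqrt q$ of the incidence graph. The paper uses it through the expander mixing lemma, with $Y_1=U$ and $Y_2=\mathcal M$ the set of all $\binom{k}{2}|\cS|$ secants. Your second-moment idea can be repaired into an elementary version of this if you drop $r_X$; for $X\notin U$ it equals $|U|-2s_X$ anyway. Two distinct lines meet in exactly one point, and $s_X=k-1$ for $X\in U$. Therefore
\[
\sum_{X\notin U}s_X=|\mathcal M|(q-1),\qquad \sum_{X\notin U}s_X^2=|\mathcal M|^2+|\mathcal M|(q-2k+2).
\]
Cauchy--Schwarz over the $q^2+q+1-|U|$ points outside $U$ then gives $|\mathcal M|(|U|-3q)\le(q^2+q+1-|U|)(q-2k+2)$. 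With $|\mathcal M|=\frac{k-1}{2}|U|$ this becomes $\frac{k-1}{2}|U|^2\le(1+o(1))q^3$, which is exactly (i).

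\textbf{Part (ii).} Random alteration cannot give anything superlinear, whatever incidence bound you use. If the ground set $P$ has $|P|<Cq$, then trivially $n\le|P|/k=O(q)$. If $|P|\ge Cq$, the average line meets $P$ in about $|P|/q$ points, so convexity gives $\Omega(|P|^3/q)$ collinear triples. Stevens--de Zeeuw type bounds only bound this number from above, so they cannot help. Now take $n$ random disjoint $k$-subsets of $P$. Each collinear triple becomes a cross configuration (two points in one member, one in another) with probability $\Theta(n^2/|P|^3)$. The expected number of bad configurations is therefore $\Omega(n^2/q)$, and this is at most $n/2$ only when $n=O(q)$.

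The missing idea is arithmetic structure that makes every cross incidence $(x-a)^2=y-b$ impossible. In $\F_p$, the paper places a generic local arc in the high base-$205$ digits. It then translates it by a set whose even-position digits lie in a $12$-element set that is square-difference-free modulo $205$. A cross incidence would force a square difference at the lowest differing digit, which is impossible; this gives size $p^{1+\log_{205}\sqrt{12}}=p^{1.2334}$.

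For $q=p^m$, the paper adds translations by polynomials in a generator $\alpha$. Their coefficients are nonsquares from $\eta\F_p\subset\F_{p^2}$ in even positions when $m$ is even, and elements of square-difference-free sets and small integers when $m$ is odd. A cross incidence then yields an identity $F^2=G$ whose leading coefficient cannot be a square. The exponents $d_m$ come from counting these translation sets, not from subfield incidence bounds. This includes the $4\mid m$ split, which reflects the parity of $t=m/2$.
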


	The four lines in \eqref{eq:localarc} reflect the prime-field construction plus odd-degree extensions, the exceptional case $m=4$, and the two parity classes of larger even extension degrees. Every displayed exponent is strictly larger than $1$, so every fixed extension degree gives a superlinear improvement over the previously known $O(q)$ constructions. The prime-field exponent is $1.2334$. For odd $m>1$ the exponent tends to $1.1167$ as $m$ grows, whereas the two larger even-degree families have exponents tending to $1.25$ and hence give the strongest asymptotic improvements.

	Our upper bound asymptotically matches that given in \cite[Theorem~5]{FTFC2024} for the case for $k=4$. The construction for the lower bound is partially built upon a recent result by Hunter, Pohoata, Verstra\"ete, and Zhang \cite{HPVZ2026},	who constructed matchings in the incidence graph of $\PG(2, q)$ of (up to a constant factor) the same size as ours for odd $m$.
	For the lower bound, our result is a significant improvement
	for the previously investigated case of $k=4$ as we immediately obtain
	the following as a consequence of \cite[Theorem 6]{FTFC2024}.

	\begin{corollary}
	For fixed $m\geq 1$, as $p\rightarrow \infty$ through the odd primes and $q=p^m$, there exist $q$-ary Singleton-optimal locally repairable codes with locality $3$, disjoint repair groups, and parameters
	\[
		[n,\kappa,6]_q=[4b,3b-3,6]_q,
		\qquad
		b=\Omega(q^{d_m}),
	\]
	where $d_m$ is defined in \eqref{eq:localarc}.
	\end{corollary}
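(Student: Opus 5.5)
The corollary should follow directly by combining Theorem~\ref{thm:main}(ii), specialised to $k=4$, with the correspondence of \cite[Theorem 6]{FTFC2024}. Fix $m\geq 1$ and let $p$ run through the odd primes, with $q=p^m$. Theorem~\ref{thm:main}(ii) with $k=4$ gives, for all sufficiently large such $p$, a $4$-uniform local arc $\cS=\{S_1,\dots,S_b\}$ in $\PG(2,q)$ with $b=|\cS|=\Omega(q^{d_m})$, where $d_m$ is given by \eqref{eq:localarc}.

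The next step is to apply \cite[Theorem 6]{FTFC2024}, as recalled in the introduction. It turns such a local arc into a linear $q$-ary code of length $n=4b$, dimension $\kappa=3b-3$ and minimum distance $6$. For concreteness, the $4b$ points of $\bigcup_i S_i$ are taken as the coordinates. Each $S_i$ gives one repair group of four coordinates, so the groups are pairwise disjoint and every coordinate is recoverable from the other three; hence the locality is $3$. The condition that every $S_i\cup S_j$ is an arc is exactly the hypothesis under which that theorem guarantees $d=6$. So in this step the only thing to check is that the hypotheses of \cite[Theorem 6]{FTFC2024} coincide with our definition of a $4$-uniform local arc, namely pairwise disjoint $4$-sets whose pairwise unions are arcs.

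Finally, we verify Singleton-optimality in the form $d\le n-\kappa-\lceil \kappa/r\rceil+2$ with $r=3$. Since $\kappa=3b-3$, we have $\lceil \kappa/3\rceil=b-1$, and therefore
\[
n-\kappa-\left\lceil \kappa/3\right\rceil+2=4b-(3b-3)-(b-1)+2=6=d,
\]
so the bound is attained. None of these steps is a real obstacle: all the difficulty sits in Theorem~\ref{thm:main}(ii) and in \cite[Theorem 6]{FTFC2024}, and the corollary is pure bookkeeping.
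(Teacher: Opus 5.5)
Your proposal is correct and is the paper's argument: specialize Theorem~\ref{thm:main}(ii) to $k=4$ and pass the resulting local arc through \cite[Theorem 6]{FTFC2024}, which is all the paper says. Your explicit check that $4b-(3b-3)-(b-1)+2=6$ makes the Singleton-optimality step visible, which the paper leaves implicit.
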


	To our knowledge, all previously known constructions for such codes
	only have size $O(q)$; see \cite{FTFC2024,XL2026}.

	The remainder of this paper is organized as follows: In Section \ref{sec:upper}, we prove the upper bounds in Theorem \ref{thm:main}, derive several properties of local arcs and introduce another new concept in finite geometry named quasiarc. In Section \ref{sec:cons}, we establish the constructions of $k$-uniform local arcs in Theorem \ref{thm:main} (ii). Section \ref{sec:comp} presents computational results on the sizes of maximum $k$-uniform local arcs for some small $k$ and $q$. In the last section, we present some remarks and questions for future research.
	
	\section{Upper bounds} \label{sec:upper}
	In this section, we prove several properties of local arcs and derive upper bounds on their sizes. For a $k$-uniform local arc $\cS$, we can always derive the associated set of secant line sets as a collection of $\binom{k}{2}$-subsets of lines as follows:
	\[
		\left\{
			\{P_iP_j: 1\leq i<j\leq k\}: S=\{P_1,\dots,P_k\}\in \cS
		\right\}.
	\]
	
	The following result follows directly from the definition of local arcs.
	\begin{lemma}\label{le:k->k-1}
		Let $\cS$ be a $k$-uniform local arc in $\PG(2,q)$ and $\cL$ the associated set of secant line sets. For each $S\in \cS$, remove an arbitrary point $P\in S$ to get $S':=S\setminus \{P\}$ and set $\cS'=\{S':S\in \cS\}$. 
		\begin{itemize}
			\item If $k>2$, then $\cS'$ is a $(k-1)$-uniform local arc;
			\item If $k=2$, then $\{(Q, \ell): S'=\{Q\}\in\cS, L=\{\ell\}\in \cL \}$ is an induced matching in the incidence graph of $\PG(2,q)$.
		\end{itemize}
	\end{lemma}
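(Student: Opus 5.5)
For the case $k>2$ we only need to check the two conditions in the definition of a local arc for $\cS'$. Fix distinct $S,T\in\cS$ and let $S'=S\setminus\{P\}$ and $T'=T\setminus\{P'\}$. Since $S'\subseteq S$ and $T'\subseteq T$, we get $S'\cap T'\subseteq S\cap T=\emptyset$. Being an arc is inherited by subsets, because three collinear points of $S'\cup T'$ would also be three collinear points of $S\cup T$. So $S'\cup T'$ is an arc. Each $S'$ has exactly $k-1$ points. The members $S'$ are nonempty and pairwise disjoint, so the map $S\mapsto S'$ is injective and $|\cS'|=|\cS|$. Hence $\cS'$ is a $(k-1)$-uniform local arc.

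For the case $k=2$, write each $S\in\cS$ as $S=\{Q_S,P_S\}$, where $P_S$ is the removed point. Set $\ell_S:=Q_SP_S$, the unique secant line of $S$, so $\cL=\{\{\ell_S\}\}$. The pairs $(Q_S,\ell_S)$ are edges of the incidence graph, since $Q_S\in\ell_S$. We need two facts: these edges form a matching, and no other incidence joins endpoints of two different edges. Both follow from the single claim
\[
Q_S\notin \ell_T \quad\text{for all distinct } S,T\in\cS .
\]
The claim gives the matching property directly. The points $Q_S$ are pairwise distinct because the members are disjoint. If $\ell_S=\ell_T$ for $S\ne T$, then $Q_S$ would lie on $\ell_T$, contradicting the claim.

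To prove the claim, suppose $Q_S\in\ell_T$. Then $Q_S$, $Q_T$ and $P_T$ are three points of $S\cup T$ on the line $\ell_T$. They are distinct, because $Q_S\notin T$ by disjointness. This contradicts $S\cup T$ being an arc. The claim rules out every extra incidence $Q_S\in\ell_T$ with $S\neq T$, so the matching is induced.

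I do not expect a real obstacle here. The only point needing care is reading "induced" correctly, together with distinctness of the chosen edges. Both reduce to the one collinearity argument above, and that argument uses the removed point $P_T$ as the third point on the line.
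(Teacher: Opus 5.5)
Your proof is correct. It is exactly the direct verification from the definition that the paper intends; the paper gives no written proof and only remarks that the lemma ``follows directly from the definition of local arcs.'' In particular, your claim $Q_S\notin\ell_T$ for $S\neq T$ comes from the three distinct points $Q_S,Q_T,P_T$ of the arc $S\cup T$, and it gives both the matching property and the induced property at once.
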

	Here, the incidence graph of $\PG(2,q)$ is the bipartite graph whose left/right vertex set is the set of points/lines of $\PG(2,q)$, and where a left vertex is adjacent to a right one if and only if the corresponding point and line are incident.
	
	It is well-known that the expander mixing-lemma shows that the size of an induced matching in the incidence graph of $\PG(2,q)$ is upper bounded by
	\begin{equation}\label{eq:trivial_bnd}
		q^{3/2}+1.	
	\end{equation}
	This follows directly from Theorem 3.1.1 in Haemers' PhD thesis \cite{HaemersPhD}
	(with $v=b=q^2+q+1$, $k=r=q+1$, $r_1=k_1=1$, $\sigma_2 = \sqrt{q}$, and $v_1=b_1$ being the size of the matching).
	Motivated by Szemer\'edi-Trotter type incidence theory, the bound has been explicitly stated by Vinh in \cite{Vinh2011}.

	By Lemma \ref{le:k->k-1}, a $k$-uniform local arc in $\PG(2,q)$ has at most $q^{3/2}+1$ members.
	
	In \cite[Theorem 5]{FTFC2024}, the authors use a coding theoretic argument to show
	that a $4$-uniform local arc has size at most
	\begin{equation}\label{eq:theirbnd}
	  |\cS|
	  \leq
	  \left\lfloor
	  \frac{7q+3+\sqrt{24q^3+q^2-6q-63}}{24}
	  \right\rfloor
	  =
	  \left(\frac{\sqrt{2}}{4\sqrt{3}}+o(1)\right)q^{3/2}.
	\end{equation}
	Here we prove a general bound on $k$-uniform local arcs using the well-known expander-mixing lemma for bipartite graphs.
	See \cite[Theorem 3.1.1]{HaemersPhD} (in the language of design theory) or \cite{AC1988}.

	\begin{lemma}[Expander-Mixing Lemma for Biregular Graphs]\label{EML}
	 Let $\Gamma$ be a $d$-regular bipartite graph on $2N$ vertices with parts $X_1$ and $X_2$.
	 Let $Y_i \subseteq X_i$ with $y_i = |Y_i|$.
	 Then
	 \[
	  \left| N E(Y_1, Y_2) - d y_1 y_2 \right| \leq \lambda_2 \sqrt{y_1 y_2 (N-y_1)(N-y_2)}.
	 \]
	 Here $E(Y_1, Y_2)$ denotes the number of edges between $Y_1$ and $Y_2$,
	 and $\lambda_2$ is the second largest eigenvalue of the adjacency matrix of $\Gamma$.
	\end{lemma}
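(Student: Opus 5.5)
The approach is the standard spectral argument: write the edge count as a bilinear form in the biadjacency matrix, split off the component along the all-ones vector, and bound the remainder by the second singular value. Order the vertices so that the adjacency matrix of $\Gamma$ is
\[
A=\begin{pmatrix} 0 & B\\ B^{T} & 0\end{pmatrix},
\]
where $B$ is the $N\times N$ biadjacency matrix between $X_1$ and $X_2$. Since $\Gamma$ is $d$-regular, every row sum and every column sum of $B$ equals $d$. Hence $B\mathbf 1=d\mathbf 1$ and $B^{T}\mathbf 1=d\mathbf 1$, where $\mathbf 1$ is the all-ones vector of length $N$. Let $u=\mathbf 1_{Y_1}$ and $w=\mathbf 1_{Y_2}$ be the characteristic vectors of $Y_1$ and $Y_2$. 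Then $E(Y_1,Y_2)=u^{T}Bw$, and the whole proof is an estimate of this bilinear form.

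\textbf{Step 1 (spectral setup).} Take a singular value decomposition $B=\sum_i \sigma_i a_i b_i^{T}$ with $\sigma_1\ge\sigma_2\ge\cdots\ge 0$. Since $B^{T}B\mathbf 1=d^2\mathbf 1$ and $\|B\|\le d$ (all row and column sums equal $d$), we may choose $\sigma_1=d$ and $a_1=b_1=\mathbf 1/\sqrt N$. The eigenvalues of $A$ are exactly $\pm\sigma_i$, since $(a_i,\pm b_i)$ are eigenvectors. Therefore the second largest eigenvalue $\lambda_2$ of $A$, counted with multiplicity, equals $\sigma_2$. This identification is the only point requiring care. If $d$ is a repeated singular value (for example, when $\Gamma$ is disconnected), then $\sigma_2=d=\lambda_2$ and the claim still holds.

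\textbf{Step 2 (decomposition).} Write $u=\frac{y_1}{N}\mathbf 1+u'$ and $w=\frac{y_2}{N}\mathbf 1+w'$ with $u',w'\perp\mathbf 1$. Expanding $u^{T}Bw$ gives four terms.
\begin{itemize}
\item The main term is $\frac{y_1y_2}{N^2}\mathbf 1^{T}B\mathbf 1=\frac{d\,y_1y_2}{N}$.
\item The cross terms vanish, because $\mathbf 1^{T}Bw'=d\,\mathbf 1^{T}w'=0$ and $u'^{T}B\mathbf 1=d\,u'^{T}\mathbf 1=0$.
\end{itemize}
Thus $N\,E(Y_1,Y_2)-d\,y_1y_2=N\,u'^{T}Bw'$.

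\textbf{Step 3 (bounding the error).} Since $w'\perp b_1$, the vector $Bw'$ lies in the span of $a_2,a_3,\dots$, so $\|Bw'\|\le\sigma_2\|w'\|$. By Cauchy--Schwarz, $|u'^{T}Bw'|\le\lambda_2\|u'\|\,\|w'\|$. Finally,
\[
\|u'\|^2=y_1-\frac{y_1^2}{N}=\frac{y_1(N-y_1)}{N},
\qquad
\|w'\|^2=\frac{y_2(N-y_2)}{N}.
\]
Multiplying by $N$ gives
\[
|N E(Y_1,Y_2)-d y_1y_2|\le\lambda_2\sqrt{y_1y_2(N-y_1)(N-y_2)},
\]
as claimed.
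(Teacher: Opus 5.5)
Your proof is correct and complete. The vanishing of the cross terms uses $B\mathbf 1=B^{T}\mathbf 1=d\mathbf 1$, and $\|B\|\le d$ gives $\sigma_1=d$. Hence $\lambda_2=\sigma_2$ for the multiplicity-counted spectrum $\{\pm\sigma_i\}$ of $A$, and the final Cauchy--Schwarz estimate with $\|u'\|^2=y_1(N-y_1)/N$ goes through. You also correctly read $\lambda_2$ with multiplicity, which is the one point that matters when $\Gamma$ is disconnected: with distinct eigenvalues, two copies of $K_{d,d}$ would violate the inequality.

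The paper gives no proof of this lemma; it only cites Haemers' thesis and Alon--Chung. Your argument is the standard direct spectral proof: split off the $\mathbf 1$-component and bound the rest by $\sigma_2$. Haemers' design-theoretic version comes instead from eigenvalue interlacing. For $0<y_i<N$, take the quotient matrix of $A$ with respect to the partition $\{Y_1,X_1\setminus Y_1,Y_2,X_2\setminus Y_2\}$. Its eigenvalues are $\pm d$ and $\pm\theta$, where $\theta=|NE(Y_1,Y_2)-dy_1y_2|/\sqrt{y_1y_2(N-y_1)(N-y_2)}$, and interlacing immediately gives $\theta\le\lambda_2$. That route makes the lemma a corollary of a general tool and never requires choosing singular vectors. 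Yours is elementary and self-contained. Both extend directly to genuinely biregular graphs with parts of different sizes; in your version the top singular value becomes $\sqrt{rk}$ with normalized all-ones singular vectors on each side.
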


	With this, we can show an upper bound on the size of $k$-uniform local arcs.

	\begin{theorem}\label{thm:upper}
	 A $k$-uniform local arc has size at most
	 \begin{equation}\label{eq:ourbnd}
	  \left\lfloor
	  \frac{
	    4(k-1)+(3k-5)q
	    +\sqrt{q\bigl(8(k-1)(q^2+k-2)-q(7k^2-10k-1)\bigr)}
	  }{2k(k-1)}
	  \right\rfloor .
	 \end{equation}
	\end{theorem}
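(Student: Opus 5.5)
The plan is to apply Lemma~\ref{EML} to the point--line incidence graph of $\PG(2,q)$. This graph is $d$-regular bipartite with $d=q+1$ and $N=q^2+q+1$ vertices in each part. Its second largest adjacency eigenvalue is $\lambda_2=\sqrt{q}$, since the squared adjacency matrix restricted to points is $qI+J$. Let $\cS$ be a $k$-uniform local arc with $b=|\cS|$. Take $Y_1=\bigcup_{S\in\cS}S$ to be the set of all points, so $y_1=kb$ by disjointness. Take $Y_2$ to be the set of all secant lines $P_iP_j$ with $P_i,P_j$ in a common member, i.e.\ the union of the associated secant line sets $\cL$.

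First I will check that $y_2=\binom{k}{2}b$ exactly. Within one member $S$ the $\binom k2$ secants are distinct because $S$ is an arc. Suppose a line were a secant of two different members $S\neq S'$. Then it would contain at least three points of $S\cup S'$ (four points, as the members are disjoint), contradicting that $S\cup S'$ is an arc.

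Next I will count the edges $E(Y_1,Y_2)$ exactly. Let $\ell$ be a secant of $S$. It contains exactly two points of $S$, again because $S$ is an arc. It contains no point of any other member $S'$, since such a point together with the two points of $S$ would be three collinear points of $S\cup S'$. Hence every line in $Y_2$ meets $Y_1$ in exactly $2$ points, and $E(Y_1,Y_2)=2y_2=k(k-1)b$. (As a check, each point of $S$ lies on exactly $k-1$ lines of $Y_2$.)

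Substituting into Lemma~\ref{EML}, we are in the regime where $d y_1y_2$ exceeds $N E$, so we obtain
\[
(q+1)\,kb\cdot\tfrac{k(k-1)}{2}b-(q^2+q+1)\,k(k-1)b
\le \sqrt{q}\,\sqrt{kb\cdot\tfrac{k(k-1)}{2}b\,(N-kb)\bigl(N-\tfrac{k(k-1)}{2}b\bigr)} .
\]
Dividing by $k(k-1)b^2/\,\!$ appropriate factors and squaring (legitimate once the left side is nonnegative; otherwise $b\le 2N/(k(q+1))$, which is already below the claimed bound) gives a quadratic inequality $Ab^2-Bb+C\le 0$ in $b$. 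Its coefficients are polynomials in $q$ and $k$. The main remaining work, and the only real obstacle, is the algebra of expanding this quadratic and simplifying. Taking the larger root and the floor should yield exactly \eqref{eq:ourbnd}. As a sanity check of the computation, the leading-order behaviour $q k^2(k-1)b^2/2\lesssim \sqrt q\, k\sqrt{(k-1)/2}\,b\,q^2$ gives $b\lesssim \frac{\sqrt2}{k\sqrt{k-1}}q^{3/2}$, matching Theorem~\ref{thm:main}(i). I would also verify the $k=4$ case against \eqref{eq:theirbnd}: there the discriminant should reduce to $q(24q^3+q^2-6q-63)\cdot$const, up to normalization.
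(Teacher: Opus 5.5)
Your proposal is correct and is essentially the paper's proof. You use the same incidence graph and the same $Y_1,Y_2$ with $y_1=kb$, $y_2=\binom{k}{2}b$, $E(Y_1,Y_2)=k(k-1)b$, and you justify these exact values from the local arc property more carefully than the paper does. The deferred algebra does go through: for $y=kb$ it gives $(k-1)y^2-\bigl(4(k-1)+(3k-5)q\bigr)y+2(q^2+q+1)(2k-2-q)\le 0$, whose larger root divided by $k$ is exactly \eqref{eq:ourbnd}. Your case split before squaring is harmless but unnecessary, since Lemma~\ref{EML} already bounds an absolute value.

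One correction to your side check for $k=4$: the bound specializes to $\bigl\lfloor\bigl(12+7q+\sqrt{24q^3-71q^2+48q}\bigr)/24\bigr\rfloor$. This agrees with \eqref{eq:theirbnd} only to leading order, and the paper notes it is strictly smaller except for a few $q$. So the radicands will not coincide up to a constant.
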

	\begin{proof}
	 Let $B$ be the point--line incidence matrix of $\PG(2,q)$ and write
	 \[
	  A=
	  \begin{bmatrix}
	   0&B\\
	   B^{\mathsf T}&0
	  \end{bmatrix}.
	 \]
	 Since
	 \[
	  BB^{\mathsf T}=B^{\mathsf T}B=qI+J,
	 \]
	 the eigenvalues of $A$ are
	 \[
	  q+1,\quad \sqrt q,\quad -\sqrt q,\quad -(q+1)
	 \]
	 with multiplicities $1$, $q^2+q$, $q^2+q$, and $1$, respectively. Hence, in Lemma \ref{EML}, the size of each bipartite part is
	 \[
	  N=q^2+q+1,
	 \]
	 the degree is $d=q+1$, and the largest nontrivial absolute eigenvalue is $\lambda_2=\sqrt q$.

	 For $Y_1$ take the points of the $k$-arcs in a $k$-uniform local arc $\cS$,
	 while for $Y_2$ take the union of secants of the $k$-arcs in $\cS$.
	 Note that a $k$-arc has $\binom{k}{2}$ secants.
	 We find $|Y_1| = k \cdot |\cS|$, $|Y_2| = \binom{k}{2} \cdot |\cS|$,
	 and $E(Y_1, Y_2) = k(k-1) \cdot |\cS|$. Putting $y = |Y_1|$,
	 we can write $y_1 = y$, $y_2 = \frac{k-1}{2} y$, and $E(Y_1, Y_2) = (k-1) y$.
	 Thus,
	 \[
	  | (q^2+q+1) (k-1) y - (q+1) \tfrac{k-1}{2} y^2 | \leq y \sqrt{q \tfrac{k-1}{2} (N-y)(N-\tfrac{k-1}{2}y)}.
	 \]
	 Solving this quadratic equation yields
	 \[
	  y \leq \frac{4(k-1)+(3k-5)q + \sqrt{q [ 8(k-1)(q^2+k-2) - q(7k^2-10k-1)  ]}}{2(k-1)}.
	 \]
	 Since $y=|Y_1|=k|\cS|$, division by $k$ gives exactly the bound in \eqref{eq:ourbnd}.
	\end{proof}

	By \eqref{eq:ourbnd}, we obtain the upper bound of Theorem \ref{thm:main}.

	The bound in Equation \eqref{eq:ourbnd} has magnitude
	\[
		\left(\frac{\sqrt{2}}{k\sqrt{k-1}}+o(1)\right)q^{3/2}
	\]
	for fixed $k$.
	Asymptotically, this matches the earlier bound in Equation \eqref{eq:theirbnd} for $k=4$, but a detailed analysis shows that the bound in \eqref{eq:ourbnd}
	is strictly smaller unless $q \in \{ 2, 4, 5, 7, 11, 16 \}$. Going by the computational results in Section \ref{sec:comp},
	for $k \leq q$ the bound appears to be tight for precisely the following parameters: $(k, q) \in \{ (2, 2), (2, 4), (3, 4), (3, 5) \}$.
	
	In the final part of this section, we introduce another geometric object from local arcs which can be used to provide restrictions on the structure of local arcs.
	\begin{definition}\label{de:quasiarc}
		Let $\Psi$ be a set of points in $\PG(2,q)$. If for any point $P\in \Psi$, there exist at least $t$ tangents through $P$, then we call $\Psi$ a \emph{$t$-quasiarc}.
	\end{definition}
	If we replace ``at least" in Definition \ref{de:quasiarc} by ``exactly", then $\Psi$ is a \emph{semiarc} which is introduced in \cite{CK2012} as a generalization of semiovals and of arcs.
	
	Suppose that $\cS$ is a local arc in $\PG(2,q)$ with $|S|\geq 2$ for any $S\in \cS$. Let $\cL$ be its associated collection of secant line sets. For each $L\in \cL$, we pick up a line $\ell_L$ in $L$ and define $\Phi= \{\ell_L:L\in \cL\}$. Then for any $\ell\in \Phi$, there exist at least two points on $\ell$ not incident with any other line in $\Phi$ which means the dual of $\Phi$ is a $2$-quasiarc.
	\begin{proposition}\label{prop:quasiarc}
		Let $\Psi$ be a $t$-quasiarc in $\PG(2,q)$. If there exists a line $\ell$ such that $|\ell\cap \Psi|=k$, then
		\begin{equation}\label{eq:Psi_bound}
			|\Psi| \leq k+\frac{(q+1-k)q}{t}.
		\end{equation}
	\end{proposition}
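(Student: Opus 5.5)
Fix a line $\ell$ with $|\ell\cap\Psi|=k$. The plan is to double count, over the $q+1-k$ points of $\ell\setminus\Psi$, how the remaining points of $\Psi$ are distributed on the lines through these points, and to use the tangents at those points.

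Step 1 (what a tangent forbids). Fix $P\in\Psi\setminus\ell$. By Definition~\ref{de:quasiarc}, $P$ has at least $t$ tangents, that is, lines $m$ through $P$ with $m\cap\Psi=\{P\}$. Such a tangent meets $\ell$ in a single point $R$. Because $m\cap\Psi=\{P\}$ and $P\notin\ell$, the point $R$ is not in $\Psi$. Hence $R$ is one of the $q+1-k$ points of $\ell\setminus\Psi$.

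Step 2 (the count). Consider the pairs $(P,m)$ with $P\in\Psi\setminus\ell$ and $m$ a tangent to $\Psi$ at $P$. Counting by $P$ shows there are at least $t\,(|\Psi|-k)$ such pairs. Counting by the intersection point $R=m\cap\ell$ instead: for each $R\in\ell\setminus\Psi$, the lines through $R$ other than $\ell$ are $q$ lines. Each of them can be a tangent at at most one point of $\Psi\setminus\ell$, namely when it meets $\Psi$ in exactly one point. So each $R$ contributes at most $q$ pairs, and there are at most $(q+1-k)q$ pairs in total.

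Step 3 (conclusion). Combining the two counts gives $t(|\Psi|-k)\le (q+1-k)q$, which rearranges to \eqref{eq:Psi_bound}.

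The only point needing care is Step 1: checking that tangents at points off $\ell$ meet $\ell$ outside $\Psi$, and that distinct pairs $(P,m)$ with the same $R$ use distinct lines through $R$. Both follow directly from the definition of a tangent, so I expect no real obstacle.
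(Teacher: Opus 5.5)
Your proposal is correct and takes essentially the same approach as the paper: you count the at least $t(|\Psi|-k)$ tangents at points of $\Psi\setminus\ell$ and bound them by the $(q+1-k)q$ lines other than $\ell$ through the points of $\ell\setminus\Psi$. Your version spells out the two facts the paper leaves implicit, namely that such a tangent meets $\ell$ outside $\Psi$ and that distinct pairs $(P,m)$ give distinct lines.
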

	\begin{proof}
		For any $Q\in \Psi\setminus \ell$, there exists at least $t$ tangent lines of $\Psi$ through $Q$. Thus, there are at least $t|\Psi \setminus \ell|$ tangent lines each of which meet $\ell$. Consequently,
		\[
			t(|\Psi|-k) = t|\Psi\setminus \ell|\leq (q+1-k)q,
		\]
		from which \eqref{eq:Psi_bound} is derived.
	\end{proof}
	
	By Proposition \ref{prop:quasiarc} and the relation between local arcs and $2$-quasiarc, we obtain the following restriction on large local arcs.
	\begin{corollary}
		Let $\cS$ be a local arc in $\PG(2,q)$ with $|S|\geq 2$ for every $S\in\cS$, and let $\cL$ be its associated collection of secant line sets. For each $L\in\cL$, choose one line $\ell_L\in L$ and put
		\[
			\Phi=\{\ell_L:L\in\cL\}.
		\]
		For $P\in\PG(2,q)$, set
		\[
			a_P=\bigl|\{\ell\in\Phi:P\in\ell\}\bigr|.
		\]
		If $q>2$, then
		\begin{equation}\label{eq:quasiarc-cor}
			q+1-a_P
			\geq
			\frac{2\bigl(|\cS|-q-1\bigr)}{q-2}.
		\end{equation}
	\end{corollary}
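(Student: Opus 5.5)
The plan is to dualize the situation so that $\Phi$ becomes a $2$-quasiarc, apply Proposition~\ref{prop:quasiarc} with the line dual to $P$, and then rearrange the resulting inequality.

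\textbf{Step 1: $\Phi$ consists of $|\cS|$ distinct lines.} Suppose $\ell_L=\ell_{L'}$ for $L\neq L'$ coming from distinct members $S_i,S_j\in\cS$. Then this line contains two points of $S_i$ and two points of $S_j$. Since $S_i\cap S_j=\emptyset$, these are four distinct collinear points of the arc $S_i\cup S_j$, which is impossible. Hence $|\Phi|=|\cS|$.

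\textbf{Step 2: the dual of $\Phi$ is a $2$-quasiarc.} This is the remark made before Proposition~\ref{prop:quasiarc}. Fix $\ell_L\in\Phi$ with $L$ coming from $S_i$, and let $Q\in S_i$ be one of the (at least two) points of $S_i$ on $\ell_L$. Suppose some other line $\ell_{L'}\in\Phi$, with $L'$ coming from $S_j$ and $j\neq i$, passes through $Q$. Then $Q$ and the two points of $S_j$ on $\ell_{L'}$ are three collinear points of the arc $S_i\cup S_j$, a contradiction. So each such $Q$ lies on no line of $\Phi$ other than $\ell_L$.

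Dually, let $\Psi$ be the set of points dual to the lines of $\Phi$. Each point of $\Psi$ then lies on at least two lines meeting $\Psi$ only in that point, namely the duals of those two points $Q$. Thus $\Psi$ is a $2$-quasiarc with $|\Psi|=|\cS|$.

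\textbf{Step 3: apply Proposition~\ref{prop:quasiarc}.} The point $P$ dualizes to a line $\ell$ with $|\ell\cap\Psi|=a_P$. Proposition~\ref{prop:quasiarc} with $t=2$ and $k=a_P$ gives
\[
2\bigl(|\cS|-a_P\bigr)\le (q+1-a_P)\,q .
\]

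\textbf{Step 4: rearrange.} Put $x=q+1-a_P$, so that $a_P=q+1-x$. The inequality becomes $2|\cS|\le 2(q+1-x)+xq$, that is,
\[
2\bigl(|\cS|-q-1\bigr)\le x(q-2).
\]
Since $q>2$, we may divide by $q-2$, which yields \eqref{eq:quasiarc-cor}.

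The only step with real content is Step~2, the $2$-quasiarc verification, and it is a direct consequence of the arc condition.
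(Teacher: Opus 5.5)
Your proposal is correct and follows the paper's own proof: dualize $\Phi$ to a $2$-quasiarc $\Psi$ with $|\Psi|=|\cS|$, apply Proposition~\ref{prop:quasiarc} with $t=2$ and $k=a_P$ to the line dual to $P$, and rearrange. Your Steps~1 and~2 spell out what the paper leaves to the remark before that proposition, namely that the lines of $\Phi$ are distinct and that the dual of $\Phi$ is a $2$-quasiarc.
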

	\begin{proof}
		In the dual plane, the lines in $\Phi$ correspond to the points of a $2$-quasiarc $\Psi$ with
		\[
			|\Psi|=|\Phi|=|\cS|.
		\]
		The dual line corresponding to $P$ meets $\Psi$ in exactly $a_P$ points. Applying Proposition \ref{prop:quasiarc} with $t=2$ and $k=a_P$ gives
		\[
			|\cS|
			\leq
			a_P+\frac{q(q+1-a_P)}{2}.
		\]
		Rearranging this inequality yields \eqref{eq:quasiarc-cor}.
	\end{proof}

	\section{Constructions}\label{sec:cons}
	As we have mentioned in Section \ref{sec:intro}, local arcs of size $\Omega(q)$ in $\PG(2,q)$ can be easily constructed via a partition of ovals. In this section, we focus on the construction of local arcs of size $\Omega(q^{1+\epsilon})$ with $\epsilon>0$.
	 
	Throughout this section, $q$ is odd. We use the following standard coordinatization of $\PG(2,q)$ associated with the planar function $f(x)=x^2$; see, for example, \cite{Deombowski,DembowskiOstrom}. It is obtained from the usual affine coordinatization by the change of coordinates
	\[
		(X,Y)\longmapsto (X,Y+X^2).
	\]
	Since $2$ is invertible in $\F_q$, the parameters $a,b\in\F_q$ below run through all non-vertical affine lines. This also explains why this description is restricted to odd characteristic.
	We use the following point, line, and incidence description:
	
	\begin{tabular}{ll}
		\textbf{Points}: &  $\{(x,y): x,y\in \F_q\} \cup \{(z) : z\in \F_q \cup \{\infty\}\}$;\\
		\textbf{Lines}:  &  $\{[a,b]: a,b\in \F_q\}\cup \{[c] : c\in \F_q \cup \{\infty\}\}$;\\
		\textbf{Incidence}: & $(x,y)$ is incident with $[a,b]$ if $y-b=(x-a)^2$;\\
		& $(x,y)$ is incident with $[c]$ if $x=c$; \\
		& $(z)$ is incident with $[a,b]$ if $z=a$; \\
		& $(z)$ is incident with $[\infty]$;\\
		& $(\infty)$ is incident with $[c]$.
	\end{tabular}

	\subsection{Constructions for $q$ prime}
	First we concentrate on the case with $q=p$ a prime. To construct a large local arc, we need to consider a slightly stronger object over integers as a lifting of local arc in $\PG(2,p)$. Hence, we have to introduce a series of extra notations over integers.

	Informally, we start with a fixed generic local arc and the collections of secant lines sets of its members, place their coordinates in the high base-$205$ digits, and then add low-digit translations from a set $T$. The family $\widetilde{\mathcal S}$ consists of all translated lifted arcs, while $\widetilde{\mathcal L}$ records the correspondingly translated secant-line sets. The restrictions imposed on $T$ are designed to ensure that distinct translated arcs are disjoint and that a translated secant can meet only the translated arc from which it originates.
	
	For $n\in \Z^+$, set $I_n=\{0,1,\dots,n-1\}$.
	For $\mathcal{S}\subseteq \binom{I_n^2}{k}$ and $\mathcal{L}\subseteq \binom{I_n^2}{\binom{k}{2}}$,  we use $(x,y)$ to denote elements in $\bigcup_{S\in \mathcal{S}} S$ and $[a,b]$ for elements in $\bigcup_{L\in \mathcal{L}} L$.
		
	Let $r$ be a prime larger than $n$. Define $\overline{(\cdot)}: \Z \rightarrow \F_r$ by $\overline{x}\equiv x \pmod{r}$. Moreover, for any $x\in \Z^n$, extend the definition of $\overline{(\cdot)}$ to $x$ coordinatewise and also to subsets of $\Z^n$.	
	We further set $\overline{\mathcal{S}}:= \{\overline{S}: S\in \mathcal{S}\}$ and $\overline{\mathcal{L}}:= \{\overline{L}: L\in \mathcal{L}\}$ for ${\cS} \subseteq \binom{I_r^2}{k}$ and $ \cL \subseteq \binom{I_n^2}{\binom{k}{2}}$.
	Suppose that $\mathcal{S}=\{S_1,S_2,\dots,S_m\}$ and $\mathcal{L}=\{L_1,L_2,\dots, L_m\}$ satisfy the following conditions:
	\begin{enumerate}[label=(\alph*)]
		\item $\overline{\mathcal{S}}$ is a $k$-uniform local arc in $\PG(2,r)$ defined via the planar function $x^2$;
		\item for each $j$, $\overline{L_j}$ is the set of secants lines to the $k$-arc $\overline{S_j}$;
		\item for any $(x,y)\in \bigcup_{S\in \mathcal{S}} S$ and any $[a,b]\in \bigcup_{L\in \mathcal{L}} L$, if $(\bar{x}, \bar{y})$ is on $[\bar{a}, \bar{b}]$ then $(x-a)^2=y-b\geq 0$.
	\end{enumerate}
	
	In other words, $\mathcal{S}$ can be viewed as a lifting of the $k$-uniform local arc $\overline{\mathcal{S}}$. However, in general, it is not possible to lift any $k$-uniform local arc to satisfy Condition (c).
	
	When conditions (a), (b) and (c) are all satisfied, for any prime $r'$ with $r'\geq r$ and
	\[
		r'> \max\left\{(x-a)^2 -(y-b) : (x,y)\in \bigcup_{S\in \mathcal{S}} S, [a,b]\in \bigcup_{L\in \mathcal{L}} L\right\},
	\]
	$\mathcal{S}$ always defines a $k$-uniform local arc in $\PG(2,r')$. Thus, we call $\mathcal{S}$ a \emph{generic $k$-uniform local arc}.  In such a case, it is clear that $\mathcal{L}$ is completely determined by $\mathcal{S}$. For the elements $S$ in $\mathcal{S}$, we called $S$ a \emph{generic $k$-arc}.
	
	%By (a), distinct members of $\mathcal{S}$ (resp. $\mathcal{L}$) share no common points (resp. lines). We set 
	%\[\tilde{\mathcal{S}} = \bigcup_{S\in \mathcal{S}} S, \quad \tilde{\mathcal{L}}=\bigcup_{L\in \mathcal{L}} L.\]
	
	\begin{example}
		One can easily verify that the following examples all satisfy conditions (a), (b) and (c).
		\begin{enumerate}[label=(\roman*)]
			\item For $k=2$ and $r=5$, we may take 
			\[
			\mathcal{S} = \{\{(0,4), (4,4)\}, \{(0,3), (2,3)\}, \{(1,3), (3,3)\}\}
			\]
			and 
			\[
			\mathcal{L} = \left\{\{[2,0]\},  \{[1,2]\}, \{[2,2]\}\right\}.
			\]
			\item For $k=3$ and $r=13$, take
			\[
			\mathcal{S} = \{\{(6,12), (2,4), (3,9)\}\}
			\]
			and 
			\[
			\mathcal{L} =\{\{[0,0], [4,8], [3,3]\}\}.
			\]
			It is clear that $\overline{\mathcal{S}}$ actually consists of only one $3$-arc; see Fig.~\ref{fig:ex_local_arc}.

		\tikzset{every picture/.style={line width=0.75pt}} %set default line width to 0.75pt        
		\begin{figure}[http]
			\centering
		\begin{tikzpicture}[
			dot/.style={circle, fill=black, inner sep=2.5pt},
			yscale=1.5, xscale=2.5 % Adjust these to change width/height easily
			]
			
			% Place the dots (Nodes)
			\node[dot, label=above:{$(6,12)$}] (A) at (0,1) {};
			\node[dot, label=above:{$(2,4)$}]  (B) at (1,1) {};
			\node[dot, label=above:{$(3,9)$}]  (C) at (2,1) {};
			
			\node[dot, label=below:{$[0,0]$}]  (D) at (0,0) {};
			\node[dot, label=below:{$[4,8]$}]  (E) at (1,0) {};
			\node[dot, label=below:{$[3,3]$}]  (F) at (2,0) {};
			
			% Draw the lines (Bipartite connections)
			\draw (A) -- (E); \draw (A) -- (F);
			\draw (B) -- (D); \draw (B) -- (F);
			\draw (C) -- (D); \draw (C) -- (E);
			
		\end{tikzpicture}
		\caption{A generic $3$-arc.}
		\label{fig:ex_local_arc}
		\end{figure}
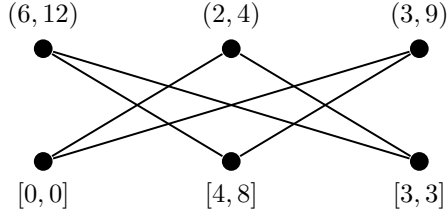
		\end{enumerate}
	\end{example}
	
	The next result shows that generic $k$-arcs always exist for any $k$.
	\begin{lemma}\label{le:generic_arc}
		For any positive integer $k$, define
		\[
			D := \left\{(x,2x): x=\left\lceil \frac{k^2}{2}\right\rceil+2i\text{ with } i=0,1,\dots,k-1 \right\}.
		\]
		Then $D$ is a generic $k$-arc.
	\end{lemma}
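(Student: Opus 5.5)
The plan is to write down the secant lines of $D$ explicitly over $\Z$. The key observation is that incidence of a point $(x,2x)$ of $D$ with a secant line is governed by a monic quadratic in $x$ whose two integer roots are the $x$-coordinates of the two points spanning that secant. Conditions (a), (b) and (c) then reduce to the statement that a small nonzero integer is not divisible by a large prime $r$.

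Write $c=\lceil k^2/2\rceil$ and $x_i=c+2i$ for $0\le i\le k-1$, so that $P_i=(x_i,2x_i)$. Requiring $P_i$ and $P_j$ (with $i\ne j$) both to lie on $[a,b]$ means $(x_i-a)^2-2x_i=(x_j-a)^2-2x_j$. Dividing by $x_i-x_j$ gives $a=\tfrac{x_i+x_j}{2}-1=c+i+j-1$, which is an integer because $x_i\equiv x_j \pmod 2$. Then
\[
b=2x_i-(x_i-a)^2=2x_i-(i-j+1)^2 .
\]
Since $|i-j+1|\le k$ and $x_i\ge c$, we get $b\ge 2c-k^2\ge 0$; this is exactly why $c$ was chosen to be $\lceil k^2/2\rceil$. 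Also $a\ge c-1\ge 0$. So $L=\{[a_{ij},b_{ij}]:i<j\}$ lies in $I_n^2$ for $n$ large, and we take $\cS=\{D\}$ and $\cL=\{L\}$.

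The central computation is the following. For the secant $[a,b]=[a_{ij},b_{ij}]$, the polynomial $g(x)=(x-a)^2-(2x-b)$ is monic of degree $2$ with roots $x_i$ and $x_j$, so $g(x)=(x-x_i)(x-x_j)$. Evaluating at a point of $D$ gives
\[
(x_l-a)^2-(2x_l-b)=4(l-i)(l-j),
\]
an integer of absolute value at most $4(k-1)^2$. Now take any prime $r>\max\{n,4k^2\}$. If $\overline{P_l}$ lies on $\overline{[a,b]}$, then $r$ divides $4(l-i)(l-j)$, which forces $l\in\{i,j\}$. In that case $(x_l-a)^2=2x_l-b$ holds over $\Z$ with both sides nonnegative, which is condition (c). This also shows that the reduced secants are exactly the secants of $\overline D$, which is condition (b).

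For (a), $\overline{\cS}$ has a single member, so it suffices that $\overline D$ is a $k$-arc in $\PG(2,r)$. The $x_i$ are distinct modulo $r$, so the $k$ points are distinct and no vertical line $[c']$ contains two of them. The line $[\infty]$ contains no affine points. A non-vertical line $[a,b]$ meets $\overline D$ in roots of a monic quadratic over $\F_r$, hence in at most two points. The same holds for every prime $r'\ge r$ exceeding the maximum in the definition, so $D$ is generic.

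I expect no real obstacle. The only delicate point is the nonnegativity $b\ge 0$, which needs the offset $c\ge k^2/2$ together with the bound $|i-j+1|\le k$.
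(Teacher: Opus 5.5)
Your proof is correct and follows essentially the same route as the paper. You compute the lifted secants $a=\tfrac{x_i+x_j}{2}-1$ and $b=2x_i-(x_i-a)^2$ explicitly, use $c=\lceil k^2/2\rceil$ to get $b\ge 0$, and rely on a monic quadratic having at most two roots mod $r$. Your factorization $(x_l-a)^2-(2x_l-b)=4(l-i)(l-j)$ only makes explicit a step the paper leaves to the arc property of $\overline{D}$: no third point of $\overline{D}$ lies on a reduced secant, so Condition (c) need only be checked at the two intended points.
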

	\begin{proof}
		As $2X-b=(X-a)^2$ is a polynomial in $X$ of degree $2$, $\mathcal{S}=\{D\}$ satisfies Condition (a) for any prime larger than $2k-2$. Let $r$ be any prime with $r>k^2+4k-7$. We only have to determine the set $L$ of the lifting of secant lines of $\overline{D}$ and show that Condition (c) holds.
%		$r>\max\{\lceil \frac{k^2}{2}\rceil + 2(k-1), k^2+4k-7\}$
		
		Suppose that $[a,b]$ defines a secant line passing through the two points corresponding to $(x_0,2x_0)$ and $(x_1,2x_1)$, i.e.,
		\[
		\begin{cases}
			2x_0 -b =(x_0-a)^2,\\
			2x_1 -b =(x_1-a)^2,
		\end{cases}
		\]
		for some distinct $x_0,x_1\in \left\{\left\lceil {k^2}/{2}\right\rceil+2i: i=0,\dots, k-1\right\}$.
		Then $a=\frac{x_0+x_1}{2}-1$ and $b=2x_0 - (\frac{x_0-x_1}{2}+1)^2$. It can be readily verified that 
		$$0<a\leq \lceil \tfrac{k^2}{2}\rceil + 2k-4<r$$
		and 
		$$0\leq 2\lceil \tfrac{k^2}{2}\rceil-k^2\leq b\leq 2\left(\lceil \tfrac{k^2}{2}\rceil+2(k-1)\right)-2^2<r.$$
		Hence, $a,b\in \{0,\dots, r-1\}$ and the lifting of all secant lines to $\overline{D}$ satisfy Condition (c).
	\end{proof}
	
	A set $A\subseteq \Z$ is \emph{square-difference-free} if for any distinct $a,b\in A$, the difference $a-b$ is not a square in $\Z$. For a positive integer $N$, a set $A\subseteq [N]$ is \emph{square-difference free mod $N$} if there do not exists different $x,y\in A$ such that $x-y$ is a square $\mathrm{mod}~N$. It is obvious that a set which is square-difference-free mod $N$ must be square-difference-free.
	
	In \cite{Ruzsa1984}, Ruzsa proved that for any positive integer $N$, there exist square-differ\-ence-free sets in $[N]$ with size $\Omega(N^{0.733077})$. This result was slightly improved in \cite{BG2008} as follows.
	\begin{lemma}\label{le:square-diff-free}
		For any positive integer $N$, there exists a set in $[N]$ which is square-difference-free with size $\Omega(N^{0.7334})$.
	\end{lemma}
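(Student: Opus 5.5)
The plan is to follow Ruzsa's digit construction from \cite{Ruzsa1984}, with the improved base found in \cite{BG2008}. Let $M$ be a squarefree integer and suppose $B\subseteq\{0,1,\dots,M-1\}$ has the property that no difference $b-b'$ of distinct elements of $B$, taken in either order, is a square modulo $M$. Fix $L$ and let $A$ be the set of integers $x=\sum_{i=0}^{2L-1} d_i M^i$ whose base-$M$ digits satisfy
\begin{itemize}
\item $d_{2j}\in B$ for every even position $2j$;
\item $d_{2j+1}$ is arbitrary in $\{0,\dots,M-1\}$ for every odd position $2j+1$.
\end{itemize}
Then $A\subseteq[M^{2L}]$ and $|A|=(|B|M)^{L}$. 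So if $N=M^{2L}$, we get $|A|=N^{\frac12+\frac{\log|B|}{2\log M}}$. For a general $N$, take the largest $L$ with $M^{2L}\le N$; this loses only a constant factor depending on $M$, which is absorbed into the $\Omega$.

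Next I show that $A$ is square-difference-free. Take distinct $x,y\in A$ with $x>y$, and suppose $x-y=s^2$. Let $i$ be the lowest position where the digits of $x$ and $y$ differ. Then
\[
x-y=M^{i}\bigl((d_i-d_i')+M t\bigr)
\]
for some integer $t$, and $0<|d_i-d_i'|<M$. In particular $x-y$ is divisible by $M^i$ but not by $M^{i+1}$. Since $M$ is squarefree, $M^i\mid s^2$ forces $M^{\lceil i/2\rceil}\mid s$.

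If $i=2j+1$ is odd, this gives $M^{j+1}\mid s$, so $M^{2j+2}\mid s^2=x-y$. That contradicts $M^{i+1}\nmid x-y$.

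If $i=2j$ is even, then $s=M^j s'$ and $s'^2=(d_i-d_i')+Mt$. Hence $d_i-d_i'$ is a square modulo $M$ with $d_i,d_i'\in B$ distinct, contradicting the choice of $B$. Because the condition on $B$ covers differences of both signs, it does not matter which of $x,y$ is larger.

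It remains to supply a good pair $(M,B)$. Here I take $M=205=5\cdot 41$, which is squarefree, together with a set $B$ of size $12$ that is square-difference-free modulo $205$, as exhibited in \cite{BG2008}. This gives the exponent
\[
\frac12+\frac{\log 12}{2\log 205}\approx 0.73342>0.7334 .
\]
The main obstacle is producing this $B$. It is a finite but nontrivial clique-type search in the Cayley graph on $\Z_{205}$ whose connection set is the nonzero squares. I would do this by computer search, verifying that all $132$ ordered differences are non-squares modulo $5$ or modulo $41$; by the Chinese remainder theorem it suffices that each difference fails to be a square modulo at least one of the two primes. Ruzsa's original choice of base yields the slightly weaker exponent $0.733077$ by the same argument.
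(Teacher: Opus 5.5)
Your proposal is correct and takes essentially the paper's route: the paper does not reprove this lemma but cites \cite{BG2008}, which is Ruzsa's digit construction with the squarefree base $205$ and the $12$-element set of Lemma~\ref{le:m=205} (that set does pass your CRT criterion). Your lowest-differing-digit and $M$-adic valuation argument is the same as the ``Digit comparison'' step in the proof of Theorem~\ref{th:construction_1}; the only cosmetic point is to translate your set by $1$ if $[N]$ means $\{1,\dots,N\}$, since $0\in A$.
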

	
	In \cite{BG2008}, the following result is used for the construction of square-difference-free sets mod $m^{2k}$ with $m=205$ from which Lemma \ref{le:m=205} is derived.
	\begin{lemma}\label{le:m=205}
		For $m=205$, the set $A = \{0, 2, 8, 14, 77, 79, 85, 96, 103, 109, 111, 181\}$ is  square-difference-free modulo $m$.
	\end{lemma}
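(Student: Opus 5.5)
This is a finite verification: for the twelve elements of $A$ we show that no difference $a-b$ with $a\neq b$ in $A$ is a square modulo $205$. Since $205=5\cdot 41$ with $5$ and $41$ distinct primes, the Chinese Remainder Theorem tells us that $t$ is a square modulo $205$ exactly when $t \bmod 5$ is a square in $\F_5$ and $t \bmod 41$ is a square in $\F_{41}$. Here $0$ counts as a square, and $t\equiv 0\pmod{205}$ would itself be a square; this cannot occur because all elements lie in $[0,204]$ and are distinct. So a difference $d=a-b$ fails to be a square modulo $205$ as soon as $d \bmod 5\in\{2,3\}$ or $d \bmod 41$ is a nonzero non-residue.

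The plan has three steps.

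First, reduce $A$ modulo $5$, giving the residues $0,2,3,4,2,4,0,1,3,4,1,1$. Also reduce $A$ modulo $41$, giving $0,2,8,14,36,38,3,14,21,27,29,17$.

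Second, list the sets of squares:
\[
\{0,1,4\}\subseteq\F_5, \qquad \{0,1,2,4,5,8,9,10,16,18,20,21,23,25,31,32,33,36,37,39,40\}\subseteq\F_{41}.
\]
Both sets are symmetric under negation, since $-1$ is a square modulo $5$ and modulo $41$ (both primes are $\equiv 1 \pmod 4$). Hence the order of each pair does not matter, and it suffices to check the $\binom{12}{2}=66$ unordered pairs.

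Third, for each pair, first test whether the difference modulo $5$ lies in $\{2,3\}$; if it does, the pair is done. Otherwise, test the difference modulo $41$ against the list of squares in $\F_{41}$. A spot check: for the pair $(14,96)$ both reduce to $14$ modulo $41$, so the $41$-component of the difference is $0$, a square. The difference is $82\equiv 2 \pmod 5$, a non-square, so the pair is fine.

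The main obstacle is simply bookkeeping over the 66 pairs. Pairs whose residues modulo $5$ coincide, or differ by $\pm 1$, must be settled by the Legendre symbol modulo $41$, and these are the cases to check carefully. I would organize the check as a $12\times 12$ table, or confirm it by a short computer check. Alternatively, one can cite the verification in \cite{BG2008}, where this set is exhibited.
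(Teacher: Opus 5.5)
Your proposal is correct, but it takes a different route from the paper. The paper gives no argument of its own for this lemma; it simply attributes the set to \cite{BG2008}. Your CRT reduction makes the statement checkable by hand, since each of the $66$ pairs becomes a quadratic-character computation modulo $5$ and modulo $41$.

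The reduction also fixes the right convention: $0$ counts as a square in each CRT component. This is exactly what the proof of Theorem~\ref{th:construction_1} needs, because there the integer $(x-a)/m^{i_0/2}$ may be divisible by $5$ or by $41$.

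The only thing left undone is the table itself, and it does close:
\begin{itemize}
\item $29$ pairs are settled modulo $5$. These are the pairs whose residue classes modulo $5$ form one of $\{0,2\},\{0,3\},\{1,3\},\{1,4\},\{2,4\}$, contributing $4+4+6+9+6$ pairs. This includes $\{14,96\}$, the only pair with equal residues modulo $41$, as your spot check found.
\item For each of the remaining $37$ pairs, the difference modulo $41$ is, up to sign, one of $3,6,7,11,12,13,14,15,17,19$. All of these are non-residues. For example, $85-0\equiv 3$, $77-2\equiv -7$, and $181-111\equiv -12 \pmod{41}$.
\end{itemize}

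Recording that short list would make your plan a complete, self-contained proof. The paper's version is shorter, but it rests entirely on the external reference.
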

	
	For any $k\geq 2$ and any prime $r$, suppose that there exist $\mathcal{S}$ and $\mathcal{L}$ satisfying conditions (a), (b) and (c).
	For a prime $p$, choose the largest positive even integer $t$ satisfying $(r^2+3r+1)m^t\leq p$ with $m=205$, i.e.,
	\begin{equation}\label{eq:condition_t_general}
		t < \log_{m}p - \log_{m}(r^2+3r+1).
	\end{equation}
	As we require that $t\geq 2$, $p$ has to be larger than $m^2(r^2+3r+1)$.
	For any $(x,y)\in I_r^2$, define its \emph{lifting} $\lambda_t(x,y)=(x\cdot m^{t/2}, y\cdot m^t)\in [0,p-1]^2$.

	Put $M=m^{t/2}$. We further define
	\begin{equation}\label{eq:T_translation}
		T=
		\left\{
		\left(u,\sum_{i=0}^{t-1}y_i m^i\right):
		u\in I_M,\ 
		y_{\mathrm{even}}\in A,\ 
		y_{\mathrm{odd}}\in\{0,\dots,m-1\}
		\right\}.
	\end{equation}
	Here $A$ is defined in Lemma \ref{le:m=205}. Since $|A|=12$,
	\[
		|T|
		=M\,|A|^{t/2}m^{t/2}
		=(m\sqrt{|A|})^t.
	\]
	
	By the definition of $t$, 
	\begin{equation}\label{eq:|T|}
			|T|=\Omega(p^{1+\log_m\sqrt{|A|}}),
	\end{equation}
	as $p\rightarrow \infty$ through odd primes.
	
	We extend the definition $\lambda_t$ in the natural way to subsets $S$ of $I_r^2$, i.e., 
	\[
	\lambda_t(S) =\{\lambda_t(x,y): (x,y)\in S\}.
	\]
	For any subset $A\subseteq \Z^2$ and any $\tau=(u,v)\in T$, set
	\[
	A+\tau \pmod{p} =\{(a_1+u \pmod{p}, a_2+v \pmod{p}):(a_1,a_2)\in A\}\subseteq \F_p^2.
	\]
	Define
	\begin{equation}\label{eq:lifting_Omega}
		\tilde{\mathcal{S}} := \{\lambda_t(S)+\tau \pmod{p}: S\in \mathcal{S}, \tau\in T\}
	\end{equation}
	and 
	\[
	\tilde{\mathcal{L}} := \{\lambda_t(L)+\tau \pmod{p}: L\in \mathcal{L},  \tau\in T\}.	
	\]
	Obviously, $\lambda_t(L)+\tau \pmod{p}$ is the set of secant lines to the arc $\lambda_t(S)+\tau \pmod{p}$.
	
	\begin{theorem}\label{th:construction_1}
		For any $k\geq 2$ and any prime $r$, suppose that $\mathcal{S}$ and $\mathcal{L}$ satisfy conditions (a), (b) and (c). Let $m$ and $A$ be defined as in Lemma \ref{le:m=205}.
		
		For any prime $p>m^2(r^2+3r+1)$, let $t$, $T$, and $\tilde{\mathcal{S}}$ be defined as in \eqref{eq:condition_t_general}, \eqref{eq:T_translation} and \eqref{eq:lifting_Omega}, respectively.
		Then $\tilde{\mathcal{S}}$ is a $k$-uniform local arc and 
		\[|\tilde{\mathcal{S}}|=\Omega( p^{1+\log_{m}\sqrt{|A|}})=\Omega(p^{1.2334}),\] 
		as $p\rightarrow \infty$ through all odd primes.
	\end{theorem}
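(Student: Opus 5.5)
The plan is to reduce every incidence question in $\PG(2,p)$ between a translated lifted point and a translated lifted secant to an identity over $\Z$. Then I would use the $m$-adic digit structure of $T$, together with Lemma~\ref{le:m=205}, to force the two translations to coincide. Concretely, take a point $P=(xM+u,\;yM^2+v)$ with $(x,y)\in S\in\cS$, $(u,v)\in T$, and a line $\ell=[aM+u',\;bM^2+v']$ with $[a,b]\in L\in\cL$, $(u',v')\in T$. Recall $M=m^{t/2}$ and $M^2=m^t$. If $P$ is incident with $\ell$ in $\PG(2,p)$, then
\[
(y-b)M^2+(v-v')\equiv (x-a)^2M^2+2(x-a)M(u-u')+(u-u')^2 \pmod p .
\]
Since $0\le x,y,a,b<r$ and $0\le u,u'<M$, $0\le v,v'<M^2$, both sides are integers of absolute value $O(r^2M^2)$. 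The defining inequality $(r^2+3r+1)m^t\le p$ is chosen precisely so that the difference of the two sides is less than $p$ in absolute value. I would check this bound explicitly, and then conclude that the congruence is an equality over $\Z$.

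The key step, which I expect to be the main obstacle, is to show $u=u'$ and $v=v'$. Put $w=u-u'$ and suppose $w\neq 0$. Let $j$ be its $m$-adic valuation, so $j<t/2$ because $|w|<M$. Reducing the integer identity modulo $m^{2j+1}$, the term $2(x-a)Mw$ has valuation at least $t/2+j\ge 2j+1$. Hence $v-v'\equiv m^{2j}c \pmod{m^{2j+1}}$ with $c\equiv (w/m^j)^2 \pmod m$. Because $m=205=5\cdot 41$ is squarefree and $m\nmid w/m^j$, the residue $c$ is a nonzero square mod $m$.

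Comparing base-$m$ expansions, $v$ and $v'$ then agree in all digits below position $2j$, and their digits at the even position $2j$ differ by $c$. Both of these digits lie in $A$, so their difference is a nonzero square mod $205$, which contradicts Lemma~\ref{le:m=205}. Thus $w=0$. Then $v\equiv v' \pmod{M^2}$ with $0\le v,v'<M^2$, so $v=v'$.

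Consequently $y-b=(x-a)^2$ over $\Z$. Reducing mod $r$, $(\bar x,\bar y)$ lies on $[\bar a,\bar b]$. By conditions (a) and (b), $[\bar a,\bar b]$ is a secant of $\overline{S'}$ for the member $S'$ with $L\in\cL$ corresponding to $S'$. A point of $\overline{S}$ on a secant of $\overline{S'}$ with $S\neq S'$ would give three collinear points in $\overline S\cup\overline{S'}$, so $S=S'$. The conclusion is: a translated secant is incident only with points of the translated arc it came from, with the same $S$ and the same $\tau$.

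With this in hand, the local arc properties follow. First, the map $(S,\tau)\mapsto\lambda_t(S)+\tau \pmod p$ is injective and distinct images are disjoint. An equality of points mod $p$ again lifts to an equality over $\Z$ by the size bound. Then $u=u'$ and $v=v'$ follow by reading off residues mod $M$ and mod $M^2$ and using $x,y<r$. Finally $\overline S$ and $\overline{S'}$ are disjoint by (a).

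Next, suppose three points of $\tilde S_1\cup\tilde S_2$ are collinear. Two of them lie in the same member $\tilde S_i$, so the line is a translated secant in $\tilde{\mathcal L}$. By the key step, the third point lies in the same $\tilde S_i$. Pulling back to $\Z$ and then mod $r$ yields three collinear points of $\overline{S}$, which contradicts (a).

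Finally, $|\tilde{\mathcal S}|=|\cS|\cdot|T|$. By \eqref{eq:|T|} this is $\Omega(p^{1+\log_m\sqrt{12}})$, and a numerical evaluation gives $1+\log_{205}\sqrt{12}\approx 1.2334$.
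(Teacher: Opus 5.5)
Your proof is correct and follows essentially the paper's route. You lift the incidence to an identity over $\Z$ via $(r^2+3r+1)m^t\le p$, use the squarefreeness of $m=205$ and the fact that the even digits lie in $A$ to force the two translations to coincide, and then reduce modulo $r$. The only difference is the order of the valuation argument: you run it on $u-u'$ and then deduce $v=v'$, whereas the paper starts from the lowest differing digit of the vertical translations (using that the $m$-exponent of a square is even) and then gets $u=u'$ from $M\mid x-a$.

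One small point: your claim that the line through two points of $\tilde S_i$ is a translated secant in $\tilde{\mathcal L}$ is exactly where condition (c) is needed. It guarantees $(x-a)^2=y-b$ over $\Z$, so the lifted points really lie on $\lambda_t([a,b])+\tau$, and you should cite (c) at that step.
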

	\begin{proof}
		The cardinality of $\tilde{\mathcal{S}}$ follows directly from $|\tilde{\mathcal{S}}|=|T|\cdot |\mathcal{S}|$ and \eqref{eq:|T|}.
		
		We divide the proof into four main steps.
		
		\noindent\emph{Disjointness.}
		We first check that distinct translated members are disjoint. Suppose that
		\[
			\lambda_t(x_0,y_0)+(u,v)
			\equiv
			\lambda_t(x_0',y_0')+(u',v')
			\pmod p,
		\]
		where $(u,v),(u',v')\in T$. By the choice of $t$, all displayed integer coordinates lie in $I_p$, so the congruence is an equality in $\Z^2$. In the first coordinate, $u,u'\in I_M$, and hence uniqueness of quotient and remainder modulo $M=m^{t/2}$ gives $x_0=x_0'$ and $u=u'$. In the second coordinate, $0\leq v,v'<m^t$, so the same argument gives $y_0=y_0'$ and $v=v'$. The original members of $\mathcal S$ are disjoint, and therefore so are the translated members.

		It remains to prove that every line in the translated secant family contains only its two intended points. Let
		\[
			\cP=\bigcup_{S\in\tilde{\mathcal S}}S,
			\qquad
			\cL=\bigcup_{L\in\tilde{\mathcal L}}L.
		\]
		\noindent\emph{Lifting from $\F_p$ to $\Z$.}

		By definition, the fixed line $\ell$ can be represented by $[a,b]\in \Z^2$ with $a=\sum_{i=0}^{t/2-1} a_i m^i+a_{t/2}m^{t/2}$ and $b=\sum_{i=0}^{t-1} b_i m^i+b_t m^t$, where $(\sum_{i=0}^{t/2-1} a_i m^i,\sum_{i=0}^{t-1} b_i m^i) \allowbreak \in T$ and $[a_{t/2}, b_t]$ belongs to a set of secant lines $L\in \mathcal{L}$.
		
		Suppose that a point $P\in \cP$ is on $\ell$ which means
		\begin{equation}\label{eq:(x,y)_on_[a,b]}
			(x-a)^2\equiv y-b \pmod{p}.
		\end{equation}
		By definition, $P$ can be represented by $(x,y)\in \Z^2$ with $x=\sum_{i=0}^{t/2-1} x_i m^i+x_{t/2}m^{t/2}$ and $y=\sum_{i=0}^{t-1} y_i m^i+y_t m^t$, where $(\sum_{i=0}^{t/2-1} x_i m^i,\sum_{i=0}^{t-1} y_i m^i)\in T$ and $(x_{t/2}, y_t)$ belongs to a $k$-arc $S\in \mathcal{S}$.
		
		By \eqref{eq:condition_t_general}, it is clear that
		\begin{equation}\label{eq:(x-a)^2_range}
			0\leq (x-a)^2 \leq \left(2(m-1)\sum_{i=0}^{t/2-1}m^i + (r-1) m^{t/2}\right)^2 <(r+1)^2 m^t\leq p-rm^t.
		\end{equation}
		Moreover, by the definition of $y$ and $b$,
		\[
		y-b>-(r-1)m^t -\sum_{i=0}^{t-1}(m-1)\cdot m^i =-r m^t+1,
		\]
		which means
		\begin{equation}\label{eq:y-b_range}
			y-b\geq -rm^t.
		\end{equation}
		By \eqref{eq:(x-a)^2_range} and \eqref{eq:y-b_range}, \eqref{eq:(x,y)_on_[a,b]} implies
		\begin{equation}\label{eq:(x,y)_on_[a,b]_strong}
			(x-a)^2= y-b\geq 0.
		\end{equation}
		
		\medskip
		\noindent\emph{Digit comparison.} We next show that
		\begin{equation}\label{eq:same_translation}
			\left(\sum_{i=0}^{t/2-1} x_i m^i,\sum_{i=0}^{t-1} y_i m^i\right)=\left(\sum_{i=0}^{t/2-1} a_i m^i,\sum_{i=0}^{t-1} b_i m^i\right).
		\end{equation}
		
		If $y=b$, then \eqref{eq:(x,y)_on_[a,b]_strong} gives $x=a$, and uniqueness of the base-$m$ expansions yields \eqref{eq:same_translation}. Assume now that $y\neq b$, and let
		\[
			i_0=\min\{i:0\leq i\leq t,\ y_i\neq b_i\}.
		\]
		If $i_0<t$, then $m^{i_0}$ is the exact power of $m$ dividing $y-b$. Since $m=5\cdot41$ is squarefree, the exact power of $m$ dividing a square is even: it is the minimum of the $5$-adic and $41$-adic valuations, both of which are even. Equation \eqref{eq:(x,y)_on_[a,b]_strong} therefore implies that $i_0$ is even and that $m^{i_0/2}\mid x-a$. Dividing by $m^{i_0}$ and reducing modulo $m$ gives
		\[
			y_{i_0}-b_{i_0}
			\equiv
			\left(\frac{x-a}{m^{i_0/2}}\right)^2
			\pmod m.
		\]
		Because $i_0$ is even, both $y_{i_0}$ and $b_{i_0}$ belong to $A$, contradicting Lemma \ref{le:m=205}. Hence $i_0=t$, so the two vertical translations agree and
		\[
			y-b=(y_t-b_t)m^t.
		\]

		Now $m^t=M^2$ divides $(x-a)^2$. Comparing the $5$-adic and $41$-adic valuations gives $M\mid x-a$. Write
		\[
			u_x=\sum_{i=0}^{t/2-1}x_i m^i,
			\qquad
			u_a=\sum_{i=0}^{t/2-1}a_i m^i.
		\]
		Then $u_x,u_a\in I_M$ and
		\[
			x-a=(x_{t/2}-a_{t/2})M+(u_x-u_a).
		\]
		Thus $M\mid u_x-u_a$. Since $|u_x-u_a|<M$, we obtain $u_x=u_a$, proving \eqref{eq:same_translation}.
		\medskip
		
		\noindent\emph{Uniqueness of the original secant.}
		Once the translations agree,
		\[
			x-a=M(x_{t/2}-a_{t/2}),
			\qquad
			y-b=M^2(y_t-b_t).
		\]
		Therefore \eqref{eq:(x,y)_on_[a,b]_strong} reduces to
		\[
			(x_{t/2}-a_{t/2})^2=y_t-b_t.
		\]
		By conditions (a), (b), and (c), the base point $(x_{t/2},y_t)$ is one of the two points of the unique base $k$-arc lying on the secant $[a_{t/2},b_t]$. Hence the translated line $\ell=[a,b]$ contains exactly its two intended points and no point of another translated member.
	\end{proof}

	\begin{remark}
		For a given $k\geq 2$, to guarantee the existence of a generic $k$-uniform local arc $\mathcal{S}$, the prime $r$ has to be large enough. Lemma \ref{le:generic_arc} provides a way to find such $\mathcal{S}$. 
		In fact, the proof of Theorem \ref{th:construction_1} shows that $\{\lambda_t(S)+\tau: S\in \mathcal{S}, \tau\in T\}$ is also a generic $k$-uniform local arc. 
	\end{remark}

	\subsection{Constructions for $q$ prime power}
	In the second part of this section, we show that the construction of local arcs in $\PG(2,p)$ with prime $p$ can be lifted to $\PG(2,q)$ where $q$ is a power of $p$. Its proof is similar to Proposition 4.1 in \cite{HPVZ2026}.
\begin{theorem}\label{th:construction_q}
	Let $m$ and $k$ be integers with $m>1$ and $k\geq 2$. 
		As $p\rightarrow \infty$ through odd primes and $q=p^m$, there exists a $k$-uniform local arc $\tilde{\mathcal{S}}$ in $\PG(2,q)$ with
	\[
		|\tilde{\mathcal{S}}|=\Omega(q^{d_m}),
	\]
	where $d_m$ is given in \eqref{eq:localarc}.
\end{theorem}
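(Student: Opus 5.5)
The plan is to follow the strategy of \cite[Proposition~4.1]{HPVZ2026}: realize $\F_q$ as an $\F_p$-vector space (or as an extension of an intermediate subfield $\F_{p^e}$ with $e\mid m$), and build the local arc in $\PG(2,q)$ as a ``product'' of a local arc over the base field and an induced-matching-type structure in the remaining coordinates. In the coordinatization by the planar function $x^2$, incidence of $(x,y)$ and $[a,b]$ is the single equation $(x-a)^2=y-b$. Fix a basis $1=\omega_0,\omega_1,\dots$ of $\F_q$ over the base field and write $x=\sum x_i\omega_i$, $y=\sum y_i\omega_i$, and similarly for $a,b$. The incidence equation then becomes a system of quadratic equations over the base field. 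Its ``leading'' coordinate equation has the shape $(x_0-a_0)^2+(\text{cross terms in } x_i-a_i)=y_0-b_0$. I would choose the basis so that this system is triangular: the first equation involves only the base coordinates plus a quadratic form $Q(x'-a')$ in the remaining coordinates, and the other equations are bilinear/linear.

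\textbf{Construction.} Take the prime-field local arc $\tilde{\mathcal S}$ of Theorem~\ref{th:construction_1} in the base coordinates $(x_0,y_0)$, of size $\Omega(p^{1.2334})$. In the extra coordinates, take a large set $U$ of pairs (point part, line part) such that a point part and a line part satisfy the extra equations only when they come from the same pair. This is exactly an induced matching in an auxiliary incidence structure over $\F_p$, and it is where the matchings of \cite{HPVZ2026} enter. The new members are $\{S\oplus u : S\in\tilde{\mathcal S},\,u\in U\}$, where the coordinates of $u$ are attached to every point of $S$ and to every secant of $S$ in the same way. The key verification mirrors Theorem~\ref{th:construction_1}. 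If a point of $S\oplus u$ lies on a secant of $S'\oplus u'$, then the extra-coordinate equations force $u=u'$. The residual quadratic contribution $Q(x'-a')$ then vanishes, so the base equation reduces to incidence in $\PG(2,p)$, and the local arc property of $\tilde{\mathcal S}$ finishes the argument. Disjointness is immediate from coordinates.

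\textbf{Counting and the case split.} The size is $|\tilde{\mathcal S}|\cdot|U|$. For odd $m$ I expect each of the $m-1$ extra $\F_p$-coordinates to contribute a factor $p^{1.1167}$ via the HPVZ matching bound, giving $p^{1.2334+1.1167(m-1)}=q^{1.1167+0.1167/m}$. For even $m$, I would instead work over $\F_{p^2}$ (respectively $\F_{p^4}$), where nonsquares behave differently. There each further degree-$2$ block should contribute $p^{2.5}$, and the base case uses the better of Theorem~\ref{th:construction_1} combined with one extra coordinate, or a dedicated $m=4$ construction with exponent $4\cdot1.1167$. I would tune the base case so that the totals match $1.25m-0.2666$ when $4\nmid m$ and $1.25m-0.7666$ when $4\mid m$.

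\textbf{Main obstacle.} The hardest step is choosing the basis and the set $U$ so that the cross terms of $(x-a)^2$ in the extension coordinates do not create spurious incidences. Concretely, I must ensure that the extra equations really form an incidence structure whose induced matchings are as large as claimed, and that their vanishing forces $u=u'$ rather than merely a weaker relation. I would first try a normal-type basis adapted to the subfield tower, so that the norm/trace form splits coordinatewise, and then check the even-degree cases separately, since there the parity of $m$ changes whether the relevant quadratic form is isotropic.
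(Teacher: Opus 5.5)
Your architecture is right: translate the prime-field local arc of Theorem~\ref{th:construction_1} by a set of translations living in the extension coordinates, show that a point of $S+\tau$ on a secant of $S'+\tau'$ forces $\tau=\tau'$, and fall back on the local arc property over $\F_p$. But there is a genuine gap. The step that carries the whole proof is the one you leave as the ``main obstacle'', and the route you sketch for it does not work as stated.

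In a basis $1=\omega_0,\omega_1,\dots$ the extra-coordinate equations are $2(x_0-a_0)(x_i-a_i)+[(x'-a')^2]_i=y_i-b_i$. They contain the base difference $c=x_0-a_0$, which varies with the base point and base secant. So your condition must hold uniformly in $c$, and you neither identify the resulting structure nor exhibit a large set $U$ satisfying it. A normal-type basis is the wrong tool. At best it diagonalizes one trace form, whereas incidence is the full system of $m$ coordinate equations. The factor $p^{1.1167}$ per extra coordinate is not a bound you can quote from HPVZ; it is only an average (see below). Your even-$m$ counts are tuned to $d_m$ rather than derived. In fact the gain per degree-$2$ block alternates rather than being uniform, which is why two different constants $0.2666$ and $0.7666$ appear.

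The missing idea is a degree grading combined with a leading-coefficient argument. Take $\alpha$ with $\F_p[\alpha]=\F_q$, the basis $1,\alpha,\dots,\alpha^{m-1}$, and translations $(f(\alpha),g(\alpha))$ with zero constant terms, where $\deg g\le m-1$ but $\deg f\le\lfloor (m-1)/2\rfloor$. Then no reduction modulo the minimal polynomial occurs. An incidence becomes the identity $(c+\phi)^2=d+\gamma$ in $\F_p[X]$, with $c,d\in\F_p$ coming from the base and $\phi,\gamma$ the differences of the translation polynomials.

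If $\phi=0$, then $\gamma=c^2-d$ is constant, hence $0$. If $\deg\phi=j\ge1$, then $\gamma$ has even degree $2j$ and leading coefficient $\phi_j^2$, independent of $c$. Consequently:
\begin{itemize}
\item the odd-degree coefficients of $g$ can be left completely free;
\item the degree-$2j$ coefficients of $g$ are taken from a square-difference-free set $A\subseteq[\lfloor p/4\rfloor]$ (Lemma~\ref{le:square-diff-free});
\item the degree-$j$ coefficients of $f$ are taken from $[\lfloor\sqrt{p/16}\rfloor]$.
\end{itemize}
Then a congruence $a-a'\equiv(e-e')^2\pmod p$ must be an equality in $\Z$, which is impossible. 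This is where your ``$p^{1.1167}$ per coordinate'' really comes from: $y$-degrees $2j-1,2j$ together with $x$-degree $j$ contribute $p\cdot p^{0.7334}\cdot p^{1/2}$.

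For $m=2$ one translates by $(0,g_1\alpha)$ with $g_1\in\F_p$, and linear independence of $1,\alpha$ separates the translates. For $m=2t$ with $t>1$, the same graded argument runs over $\F_{p^2}$, starting from the $m=2$ arc. Since every element of $\F_p^*$ is a square in $\F_{p^2}$, the nonzero differences of $\eta\F_p$ are nonsquares for a nonsquare $\eta\in\F_{p^2}$. So the even coefficients of $g$ may range over $\eta\F_p$ and the coefficients of $f$ over all of $\F_{p^2}$, which yields the $1.25$-type exponents. The case $m=4$ is instead covered by the odd-type construction over $\F_p$. Without these specific choices, your assertion that the extra equations force $u=u'$ remains unproved.
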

\begin{proof}
	For $p$ large enough, by Theorem \ref{th:construction_1}, there exists $k$-uniform local arc $\mathcal{S}$ in $\PG(2,p)$.
	Let $\mathcal{L}$ denote the set of secants lines to $\mathcal{S}$, i.e.
	\[
	\mathcal{L} = \left\{ \left\{[a,b]\in \F_p^2: [a,b] \text{ is a secant line of }S\right\}: S\in \mathcal{S}\right\}.
	\]	
	We separate the proof into 4 cases depending on the value of $m$. In each case, we embed $\mathcal{S}$ and $\mathcal{L}$ in $\PG(2,q)$ and construct a set $T$ of translations acting them to obtain $\tilde{\mathcal{S}}$ and $\tilde{\mathcal{L}}$. Our goal is to show that for any line $\tilde{\ell}:=\ell+\tau$ with $\ell$ in some $L\in \mathcal{L}$ and $\tau\in T$, if a point $\tilde{P}$ in $\bigcup_{S\in \tilde{\mathcal{S}}}$ is on $\tilde{\ell}$, then $\tilde{P}$ must belong to $S+\tau$ where $S\in \mathcal{S}$ takes $L$ as its associated set of secant lines. Together with the condition that $\mathcal{S}$ is a local arc,  it follows that $\tilde{\ell}$ meets exactly one element in $\tilde{\mathcal{S}}$, that is $S+\tau$, in two points and $\tilde{\ell}$ is not incident with any other element in $\tilde{\mathcal{S}}$.
	
	\textbf{Case I: $m=2$.} Let $\alpha$ be an element in $\F_{p^2}$ such that $\F_p[\alpha] =\F_{p^2}$ which means the minimal polynomial of $\alpha$ over $\F_p$ is of degree $2$. Set
	\[
		T := \{(0,g_1\alpha ) : g_1\in \F_p\}.
	\]
	Letting $T$ act on $\mathcal{S}$ and $\mathcal{L}$ by addition, we get
	\[
		\tilde{\mathcal{S}} = \left\{
			\left\{ (f_0, g_0+g_1\alpha): (f_0, g_0)\in S
			\right\}: g_1\in \F_p, S\in \mathcal{S}
		\right\}
	\] 
	and
	\[
		\tilde{\mathcal{L}} = \left\{
			\left\{
				[a_0,b_0+b_1\alpha]: (a_0,b_0)\in L
			\right\}: b_1\in \F_p, L\in \mathcal{L}
		\right\}.
	\]
	
	It is clear that $|\tilde{\mathcal{S}}|=|\mathcal{S}| \cdot p$. 	By Theorem \ref{th:construction_1}, we can construct $\tilde{\mathcal{S}}$ of size
	\[
	|\tilde{\mathcal{S}}| =\Omega (q^{1.1167}),
	\]
	as $p\rightarrow \infty$ through odd primes.
	
	For a given line $\tilde{\ell}=[a_0,b_0]+\tau$ where $\tau=(0,b_1\alpha)\in T$, suppose that $(f_0,g_0)+(0,g_1\alpha)$ is a point, denoted by $\tilde{P}$, incident with $\tilde{\ell}$, that is,
	\[
		(f_0-a_0)^2=g_0-b_0+(g_1-b_1)\alpha.
	\]
	By the definition of $\alpha$, we must have $g_1=b_1$ which means $\tau=(0,b_1\alpha)=(0,g_1\alpha)$ and the point $\tilde{P}=(f_0,g_0)+\tau$.
	
	\textbf{Case II: $m=2t$ with $t>1$.}
	Put $F=\F_{p^2}$. Choose $\alpha\in\F_{p^{2t}}$ such that
	\[
		F[\alpha]=\F_{p^{2t}},
	\]
	so the minimal polynomial of $\alpha$ over $F$ has degree $t$. Independently, choose a fixed nonsquare $\eta\in F$. Set
	\[
		s=\left\lfloor\frac{t+1}{2}\right\rfloor
	\]
	and define
	\[
	T=\left\{
	(f(\alpha),g(\alpha)):
	f=\sum_{i=1}^{s-1}f_iX^i,\ f_i\in F,\ 
	g=\sum_{i=1}^{t-1}g_iX^i,\ 
	g_{\mathrm{even}}\in\eta\F_p,\ 
	g_{\mathrm{odd}}\in F
	\right\}.
	\]
	Every nonzero element of $\F_p$ is a square in $F$, since for $c\in\F_p^*$,
	\[
		c^{(p^2-1)/2}=(c^{p-1})^{(p+1)/2}=1.
	\]
	Consequently, every nonzero element of $\eta\F_p$ is a nonsquare in $F$, and the nonzero difference of two distinct elements of $\eta\F_p$ is again a nonsquare.

	Let $\mathcal{S}'$ be a $k$-uniform local arc constructed in Case I and $\mathcal{L}'$ its associated sets of secant lines. Set $\tilde{\mathcal{S}} =\{\mathcal{S}' + \tau: \tau\in T\}$ and $\tilde{\mathcal{L}} =\{\mathcal{L}' + \tau: \tau\in T\}$. Clearly,
	\[
		|\tilde{\mathcal{S}}| = 
		\begin{cases}
			|\mathcal{S}'|\cdot p^{5(s-1)}, & 2\nmid t;\\
			|\mathcal{S}'|\cdot p^{5s-3}, & 2\mid t,
		\end{cases}
	\]
	which further equals to
	\[
	|\tilde{\mathcal{S}}| = 
	\begin{cases}
		|\mathcal{S}'|\cdot p^{5(t-1)/2}, & 2\nmid t;\\
		|\mathcal{S}'|\cdot p^{5t/2-3}, & 2\mid t.
	\end{cases}
	\]
	By Case I, we can construct $\tilde{\mathcal{S}}$ of size
	\[
	|\tilde{\mathcal{S}}| =
	\begin{cases}
		\Omega(q^{1.25-0.2666/m}), & 2\nmid t;\\
		\Omega(q^{1.25-0.7666/m}), & 2\mid t,
	\end{cases}
	\]
	as $p\rightarrow \infty$ through all odd primes.
	
	Take any secant line 
	$\tilde{\ell} :=[u(\alpha),v(\alpha)]$ with
	\[u=u_{0}+\sum_{i=1}^{s-1}u_i X^i\] 
	and 
	\[v=v_0+\sum_{i=1}^{t-1} v_i X^i\] 
	which means $\tilde{\ell}=[u_0,v_0]+\tau$ with $\tau=(\sum_{i=1}^{s-1}u_i \alpha^i, \sum_{i=1}^{t-1} v_i \alpha^i)\in T$ and $[u_{0}, v_{0}]\in L$ for some $L\in \mathcal{L}'$. 
	
	Suppose that $(f(\alpha),g(\alpha))\in \tilde{\mathcal{S}}$ is a point $\tilde{P}$ on $\tilde{\ell}$, that is
	\begin{equation}\label{eq:(f,g)_on_[u,v]}
		(f(\alpha)-u(\alpha))^2 = g(\alpha) -v(\alpha),
	\end{equation}
	where $\deg(f)\leq s-1$ and $\deg(g)\leq t-1$.
	
	As the degrees of polynomials $(f-u)^2$ and $g-v$ are both smaller than $t$, \eqref{eq:(f,g)_on_[u,v]} holds if and only if
	\begin{equation}\label{eq:poly_condition_for_q}
		(f-u)^2 = g-v
	\end{equation}
	in $\F_{p^2}[X]$.
	
	Assume that $\sum_{i=1}^{t-1}v_i X^i$ and $\sum_{i=1}^{t-1}g_i X^i$ are different, and let
	\[
		i_M=\max\{i:g_i\neq v_i\}.
	\]
	Equation \eqref{eq:poly_condition_for_q} shows that $i_M$ must be even, say $i_M=2j$, because the degree of a nonzero square is even. By construction, $g_{2j},v_{2j}\in\eta\F_p$, so their nonzero difference is a nonsquare in $F=\F_{p^2}$. On the other hand, the leading coefficient of $(f-u)^2$ is a square in $F$, contradicting \eqref{eq:poly_condition_for_q}. Therefore,
	we must have
	\[
	\sum_{i=1}^{t-1}v_i X^i=\sum_{i=1}^{t-1} g_i X^i.
	\]
	Thus $g-v=g_0-v_0$ is constant. Equation \eqref{eq:poly_condition_for_q} then shows that $(f-u)^2$ is constant, and hence $f-u=f_0-u_0$ is constant as well. Therefore,
	\[
		(f(\alpha),g(\alpha))=(f_0,g_0)+\tau.
	\]

	\textbf{Case III: $m=2t+1$.} Let $\alpha$ be an element in $\F_{p^{m}}$ such that $\F_{p}[\alpha] =\F_{p^{m}}$ which means the minimal polynomial of $\alpha$ over $\F_{p}$ is of degree $m$.
	Set $M_1=4$ and $M_2=16$.
	
	Let $A$ be a square-difference-free subset in $[\lfloor p/M_1\rfloor]$. Define $T\subseteq\F_q^2$ as follows:
	\begin{align*}
	T=\left\{
	(f(\alpha), g(\alpha)) :  f=\sum_{i=1}^{t} f_i X^i, f_i\in [\lfloor \sqrt{p/M_2}\rfloor], \right. \\
	\left. g=\sum_{i=1}^{m-1} g_i X^i, g_{\mathrm{even}}\in A, g_{\mathrm{odd}}\in \F_{p}
	\right\}.
	\end{align*}
	Here positive integers $f_i$ and $g_{\mathrm{even}}$ are viewed as elements in $\F_p$ under the modulo operation.
	
	Let $\cS$ be a $k$-uniform local arc in $\PG(2,p)$ as defined in the very beginning of this proof, set $\tilde{\mathcal{S}} :=\{\mathcal{S} + \tau: \tau\in T\}$ and $\tilde{\mathcal{L}} :=\{\mathcal{L} + \tau: \tau\in T\}$. Clearly,
	\[
	|\tilde{\mathcal{S}}| = |\mathcal{S}|\cdot \left(\left\lfloor\sqrt{\frac{p}{M_2}}\right\rfloor\right)^{t} p^{t} |A|^t.
	\]

	As in Case~II, an incidence between differently translated objects gives a polynomial identity
	\begin{equation}\label{eq:poly-odd}
		F(X)^2=G(X)
		\quad\text{in }\F_p[X],
	\end{equation}
	with $\deg F\le t$ and $\deg G\le2t<m$. If the nonconstant part of $G$ is nonzero, let $2j$ be its largest degree. Its leading coefficient is
	\[
	a-a'\equiv (c-c')^2\pmod p,
	\]
	where $a,a'\in A$ and $c,c'\in [\lfloor \sqrt{p/M_2}\rfloor]$. Here
	\[
	\abs{a-a'}<\frac p{M_1},
	\qquad
	0\le(c-c')^2<\frac p{M_2}.
	\]
	Since $M_1=4$ and $M_2=16$, the congruence can only be equality in $\Z$. This contradicts the square-difference-free property of $A$. Hence the nonconstant part of $G$ is zero, and then the nonconstant part of $F$ is zero. Thus the translations coincide.
	
	The translation set has size
	\[
	\abs{T}=\left( \left\lfloor \sqrt{\frac{p}{M_2}} \right\rfloor \right)^t p^t\abs A^t
	=\Omega\bigl(p^{2.2334\cdot t}\bigr).
	\]
	Combining this with $|\cS|=\Omega(p^{1.2334})$ and using $m=2t+1$ gives
	\[
	\abs{\widetilde\cS}
	=\Omega\bigl(p^{1.2334+2.2334\cdot t}\bigr)
	=\Omega\bigl(q^{1.1167+0.1167/m}\bigr).
	\]
		
	\textbf{Case IV: $m=2t$ with $t>1$.} We can use the construction in Case III again and omit its proof. The cardinality of $\tilde{\mathcal{S}}$ becomes
	\[
		|\tilde{\mathcal{S}}|= \Omega(q^{1.1167}).
	\]
	The construction in Case II is always better than the one above for $t$ odd and for $t>2$ if $t$ is even. Hence it only works better for $m=4$.
	
	In summary, by combining Cases I–IV, we finish the proof.
\end{proof}

	\section{Small Numbers}\label{sec:comp}
	For small $k$ and $q$ a fairly standard integer linear program (ILP) can determine the size of a maximum $k$-uniform local arcs (see below).

	\begin{center}
	\begin{tabular}{c|rrrrrrrrrrr}
	 $q \backslash k$ & 2 & 3 & 4 & 5 & 6 & 7 & 8 & 9 & 10 & 11 & 12\\ \hline
	 2 & 3 & 1 & 1 \\
	 3 & 4 & 1 & 1 \\
	 4 & 7 & 4 & 1 & 1 & 1 \\
	 5 & 9 & 5 & 1 & 1 & 1 \\
	 7 & 13& 8 & 3 & 1 & 1 & 1 & 1 \\
	 8 &$\geq17$&9&4&3& 1& 1 & 1 & 1 & 1 \\
	 9 &$\geq20$&$\geq9$&4&2 & 1 & 1 & 1 & 1 & 1 \\
	 11& $\geq24$ & $\geq10$ & 7 & 2 & 2 & 1 & 1 & 1 & 1 & 1 & 1
	\end{tabular}
	\end{center}

	An entry marked by ``$\geq$'' is only a certified lower bound: the ILP found a feasible local arc of the displayed size but did not certify optimality within the one-day computational limit. Unmarked entries are the exact values reported.

	For $q=7,8,9$ and $k=4$, this matches the results in \cite[Example 1]{FTFC2024}.
	For $q=11$ and $k=4$, our table implies that the largest Singleton-optimal
	$(n, K, 6; 3)_{11}$-LRC with disjoint repair groups has length $4 \cdot 7 = 28$.\footnote{%
	We used Gurobi 11.0.2 on an AMD EPYC 9684X within GAP \cite{GAP} with the packages Gurobify, FinInG, and Images.
	No search ran for longer than a day.}

	The ILP used for obtaining these numbers is as follows.
	Let $\cP = \{ P_1, \ldots, \allowbreak P_{q^2+q+1}\}$ denote the points of $\PG(2, q)$,
	and let $\cL = \{ L_1, \ldots, L_{q^2+q+1} \}$ denote the lines of $\PG(2, q)$.
	Theorem \ref{thm:upper} gives an upper bound on the number $m$ of $S_i$'s.
	We take the following $[2(q^2+q+1)+1]m$ binary variables: $P_{i,j}$, $L_{i,j}$, $M_j$
	for all $1 \leq i \leq q^2+q+1$, $1 \leq j \leq m$.
	The sum $\sum_j M_j$ counts the number of $\cS_i$ and we maximize
	\[
	 M_1 + M_2 + \cdots + M_m.
	\]
	The variable $P_{i,j}$ indicates if $P_i$ is in $\cS_j$.
	The variable $L_{i,j}$ indicates if $L_i$ is a secant of $\cS_j$.
	We impose the following constraints:
	\begin{itemize}
	 \item $M_j \geq M_{j+1}$ for all $1 \leq j \leq m-1$, \hfill (symmetry breaking)
	 \item $-k M_j + \sum_i P_{i,j} = 0$ for all $j$, \hfill (each $S_j$ has $k$ points)
	 \item $\sum_{P_{i'} \in L_{i}} P_{i',j} \leq 2$ for all $i,j$, \hfill ($S_j$ is an arc)
	 \item $0 \leq -2L_{i,j} + \sum_{P_{i'} \in L_{i}} P_{i',j} \leq 1$ for all $i,j$, \hfill ($L_{i,j}=1$ iff secant of $S_j$)
	 \item $\sum_j P_{i,j} \leq 1$ for all $i$, \hfill ($S_j \cap S_{j'} = \emptyset$)

	 \item $m L_{i,j} + \sum_{P_{i'} \in L_{i}, j' \neq j} P_{i',j} \leq m$ for all $i,j$, \hfill (secants of $S_j$ avoid $S_{j'}$).
	\end{itemize}
	We added further symmetry breaking by letting $P_{i,1}$ correspond to the largest arc contained in all arcs of size $k$.
	
	\section{Concluding Remarks} \label{sec:fut}
	After this document has been accepted pending minor revisions, using a well-known LLM, Pohoata \cite{Pohoata2026} improved the results
	in \cite{HPVZ2026}. These improvements
	adopt \textit{mutatis mutandis} to our problem of \textit{local arcs}.
	Thus, Theorem \ref{thm:main} can be improved to the following.
		
	\begin{theorem}\label{thm:po}
		Fix an integer $k\geq 2$ and a prime $r \geq 5$.
		Then there exist $k$-uniform local arcs $\cS$ in $\PG(2,p)$ with $$|\cS|=\Omega(p^{1.5 - 2/(r-1)}),$$
		as $p\rightarrow \infty$ through the primes satisfying $p \equiv \pm1 \pmod{r}$.
	\end{theorem}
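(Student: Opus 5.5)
The plan is to imitate the proof of Theorem \ref{th:construction_1}, but to replace the base-$205$ digit arithmetic by arithmetic in a totally real number field. If $p\equiv\pm1\pmod r$, then $p^2\equiv 1\pmod r$, so $\F_{p^2}$ contains a primitive $r$-th root of unity $\zeta$. Then $\theta=\zeta+\zeta^{-1}$ lies in $\F_p$, because it is fixed by Frobenius $\zeta\mapsto\zeta^{\pm1}$.

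Put $n=(r-1)/2$ and $K=\mathbb Q(\zeta_r+\zeta_r^{-1})$, a totally real field of degree $n$ with ring of integers $\Z[\theta_0]$, where $\theta_0=2\cos(2\pi/r)$. Sending $\theta_0\mapsto\theta$ defines a ring homomorphism $\pi:\Z[\theta_0]\to\F_p$. Its kernel is a prime ideal $\mathfrak p$ of norm $p$, since $p$ splits completely in $K$. Every nonzero element of $\mathfrak p$ therefore has $|N_{K/\mathbb Q}|\ge p$.

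I will take points and lines of the form $(\pi(x),\pi(y))$ and $[\pi(a),\pi(b)]$ with $x,y,a,b\in\Z[\theta_0]$ of controlled size in every real embedding. Suppose every conjugate of $x-a$ is $O(p^{1/(2n)})$ and every conjugate of $y-b$ is $O(p^{1/n})$, with constants chosen small. Then $\delta=(x-a)^2-(y-b)$ has $|N(\delta)|<p$. Hence an incidence in $\PG(2,p)$ (in the planar-function coordinates of Section \ref{sec:cons}) forces $(x-a)^2=y-b$ exactly in $\Z[\theta_0]$. This is the analogue of \eqref{eq:(x,y)_on_[a,b]_strong}.

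With the relevant inequalities, a box of integer coordinates $0\le c_i<cp^{1/(2n)}$ in the power basis gives about $p^{1/2}$ choices for $x$. A box with $0\le c_i<cp^{1/n}$ gives about $p$ choices for $y$. This is the source of the exponent $1.5$.

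As in Theorem \ref{th:construction_1}, I fix a generic $k$-uniform local arc $(\mathcal S,\mathcal L)$ satisfying (a)–(c), for example from Lemma \ref{le:generic_arc}. I lift it by scaling, placing base coordinates at a scale $\Lambda$ and $\Lambda^2$ (a rational integer factor) that dominates the translation parts. Then I translate by a set $T$ of pairs $(u,v)$ with $u$ in the $x$-box and $v$ in a subset $V$ of the $y$-box.

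The replacement for the square-difference-free digits of Lemma \ref{le:m=205} is positivity. An exact identity $(x-a)^2=y-b$ makes $y-b$ totally nonnegative. So I choose $V$ such that no nonzero difference $v-v'$ (after adding the bounded base contribution $\Lambda^2(y_0-b_0)$, handled by the scale separation) is totally positive. Concretely, I take $V$ to be an antichain for the coordinatewise order in the vector of conjugates, for instance the elements whose conjugate vector lies in a thin slab $\sum_j\sigma_j(v)\in[s,s+1)$. Such a slab in a box of side $p^{1/n}$ in $\mathbb R^n$ contains about $p^{1-1/n}$ lattice points. The total count is then $|T|\approx p^{1/2}\cdot p^{1-1/n}=p^{1.5-2/(r-1)}$, as claimed.

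Given $v=v'$, the identity reduces to $(x-a)^2$ being a square of a bounded base difference, and scale separation forces $u=u'$ exactly as at the end of the digit comparison in Theorem \ref{th:construction_1}. The final step, showing that only the two intended base points lie on each secant, is then identical to that proof.

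The main obstacle is making the positivity and antichain step compatible with the base layer. The base difference $\Lambda^2(y_0-b_0)$ is itself a rational integer that may be positive, and $v-v'$ only needs to fail total positivity after this shift. I would first try to choose $V$ inside the slab of conjugate-trace exactly $0$, together with a separation $\Lambda\gg p^{1/n}$ impossible within the norm budget. If that fails, I would instead cut $V$ so that every nonzero difference has some conjugate below $-C\Lambda^2$. The second delicate point is bounding all conjugates, not just the coefficients, in the norm estimate; this costs only constants because the power basis is fixed once $r$ is fixed.
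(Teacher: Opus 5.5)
\textbf{There is a genuine gap at the central step.} Your preliminary steps are sound: the map $\pi\colon\Z[\theta_0]\to\F_p$, the norm bound that forces the lifted identity $(x-a)^2=y-b$ in $\Z[\theta_0]$, the box counts, and the fixed-trace antichain (which already settles the induced-matching case). The step that carries the theorem is missing. You need that an incidence between a base point translated by $\tau=(u,v)$ and a base secant translated by $\tau'=(u',v')$ forces $\tau=\tau'$. You flag this as the main obstacle, and the architecture you chose cannot resolve it.

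\textbf{Why the scaled base defeats positivity.} With the base at a dominating scale the identity reads $(\Lambda d+w)^2=\Lambda^2e+z$, where $d=x_0-a_0$, $e=y_0-b_0$, $w=u-u'$, $z=v-v'$. Every secant $[a_0,b_0]$ carries a base point with $d\neq0$, since the only point of $[a,b]$ with first coordinate $a$ is $(a,b)$. At that point $e=d^2>0$, so $\Lambda^2e+z$ is totally positive for every admissible $z$, and the positivity/antichain criterion excludes nothing. You must rule out $z=w(2\Lambda d+w)$ directly, and the cross term $2\Lambda dw$ has the same order as the differences in $V$. In Theorem~\ref{th:construction_1} this never matters: the $m$-adic argument reads only the lowest nonzero digit, where the base layer is invisible. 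Archimedean size reads the largest terms, which is exactly where the base and the cross term sit.

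\textbf{Your two fallbacks.} The first (fixed-trace $V$) fails. Taking traces leaves $\mathrm{Tr}(w^2)=-2\Lambda d\,\mathrm{Tr}(w)$, which has nonzero solutions with all conjugates $O(\sqrt\Lambda)$. For example, for $r=5$ and $\Lambda=c^2+5$, the element $w=d(2c-10+4c\theta_0)$ has $\mathrm{Tr}(w)=-20d$ and $\mathrm{Tr}(w^2)=40\Lambda d^2$. This produces extra incidences for the natural full fixed-trace $V$. The second fallback asks that every nonzero difference in $V$ have a conjugate below $-C\Lambda^2$, which contradicts $\Lambda^2$ dominating the $y$-box. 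Shrinking $\Lambda$ instead destroys your last step, where scale separation is what forces $u=u'$.

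\textbf{A repair.} Reverse the roles: do not scale the base, make the $x$-translations coarse, and use the trace identity rather than positivity. Let $\mathcal S\subseteq\binom{I_{r_0}^2}{k}$ be a generic local arc. Take points $(x_0+u,y_0+v)$ and lines $[a_0+u',b_0+v']$, where $u\in U\subseteq C\,\Z[\theta_0]$ (coefficients of $u/C$ below $c\,p^{1/(2n)}/C$) and $v\in V$, a fixed-trace section of the $y$-box. An incidence gives $(d+w)^2=e+z$ with $|d|,|e|<r_0$ and $\mathrm{Tr}(z)=0$. Hence
\[
\sum_{j=1}^{n}\bigl(d+\sigma_j(w)\bigr)^2=ne<nr_0,
\]
so every conjugate of $w$ has absolute value below $r_0+\sqrt{nr_0}$. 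A nonzero $w\in C\,\Z[\theta_0]$ has $|\mathrm{N}(w)|\ge C^n$, hence a conjugate of absolute value at least $C$. Choosing $C\ge r_0+\sqrt{nr_0}$ forces $w=0$; then $e=d^2$ and $z=0$, and conditions (a)--(c) finish exactly as before.

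Disjointness follows the same way: use $C\,\Z[\theta_0]\cap\Z=C\Z$ for the first coordinate and the trace for the second. The coarseness costs only the constant factor $C^{-n}$, so $|U|\gg p^{1/2}$ and $|V|\gg p^{1-1/n}$, giving $|\tilde{\mathcal S}|=\Omega(p^{1.5-2/(r-1)})$.
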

	It seems plausible that analogous improvements of Theorem \ref{thm:main}
	as in Theorem \ref{thm:po} hold for general prime powers $q$.
	
	We showed in the above that a $k$-uniform local arc can be significantly larger than $O(q)$ for fixed $k$.
	It would be interesting to investigate the possible sizes of maximum $k$-uniform local arcs for other regimes where $k$ is not constant.
	Clearly, $k \leq q+2$ for $q$ even, respectively, $k \leq q+1$ for $q$ odd,
	with equality if and only if the $k$-uniform local arc corresponds to a hyperoval, respectively,
	oval.
	One interesting regime is
	\[
		k=Cq+o(q),
		\qquad 0<C\leq1.
	\]
	Substituting this relation directly into the exact bound in Theorem \ref{thm:upper} gives
	\[
		|\cS|
		\leq
		\frac{3C+\sqrt{C(8-7C)}}{2C^2}+o(1).
	\]
	Thus, in this regime the number of constituent arcs is bounded independently of $q$. Determining the optimal constant remains an interesting problem.

	Call $\cS$ a \emph{$k$-uniform $m$-wise local arc} if the following conditions are satisfied:
	\begin{itemize}
		\item $S_i\cap S_j=\emptyset$ for any distinct $S_i,S_j\in \cS$, and
		\item$\bigcup_{i=1}^m S_i$ is an arc for any $m$ subsets $S_1$, $S_2$, $\dots, S_m\in \cS$.
	\end{itemize}
	It would be interesting to generalize our results to this setting.
	
	For $k\neq 4$, $k$-uniform local arcs fail to induce LRCs under the framework proposed in \cite{FTFC2024}. In Theorem 9 of \cite{FTFC2024}, we find one more geometric structure, living in $PG(3, q)$, which corresponds to a certain type of LRC. This prompts us to ask if more finite geometric structures can correspond to special types of LRCs.

	\bigskip
	\paragraph*{\bf Declaration of competing interest}
	The authors declare that they have no known competing financial interests or personal 
	relationships that could have appeared to influence the work reported in this paper.
	\bigskip
	\paragraph*{\bf Declaration of generative AI use}
	The authors declare that no generative artificial intelligence (AI) or AI-assisted technologies were used in the writing process or preparation of this manuscript. 
	\bigskip
	\paragraph*{\bf Acknowledgement} We thank Jacques Verstra\"ete
	for discussing the construction from \cite{HPVZ2026} with us. Yue Zhou is supported by the National Natural Science Foundation of China (No.\ 12371337) and the Natural Science Foundation of Hunan Province (No.\ 2023RC1003).
	\bigskip
	\paragraph*{\bf Data availability}
	No data was used for the research  described in the article.

\end{document}